\documentclass[11pt,reqno]{amsart}
\usepackage{amsmath,amssymb,amscd,amsxtra,amsfonts,mathrsfs}
\usepackage{bm,epsf,graphicx,epsfig,color,latexsym,cite,cases}

\setlength{\topmargin}{-1.5cm}
\setlength{\oddsidemargin}{0.0cm}
\setlength{\evensidemargin}{0.0cm}
\setlength{\textwidth}{16.7cm}
\setlength{\textheight}{23cm}
\headheight 20pt
\headsep    26pt
\footskip 0.4in

\newtheorem{theorem}{Theorem}[section]
\newtheorem{corollary}[theorem]{Corollary}

\newtheorem{proposition}[theorem]{Proposition}
\newtheorem{remark}[theorem]{Remark}
\newtheorem{assumption}[theorem]{Assumption}

\numberwithin{equation}{section}

\begin{document}

\title[an inverse random source problem for Maxwell's equations]{An inverse
random source problem for Maxwell's equations}

\author{Peijun Li}
\address{Department of Mathematics, Purdue University, West Lafayette, Indiana 47907, USA}
\email{lipeijun@math.purdue.edu}

\author{Xu Wang}
\address{Department of Mathematics, Purdue University, West Lafayette, Indiana 47907, USA}
\email{wang4191@purdue.edu}

\thanks{The research is supported in part by the NSF grant DMS-1912704.}

\subjclass[2010]{78A46, 65C30}

\keywords{Inverse source problem, Maxwell's equations, complex random source,
fractional Gaussian field, pseudo-differential operator, principal symbol}

\begin{abstract}
This paper is concerned with an inverse random source problem for the
three-dimensional time-harmonic Maxwell equations. The source is assumed to be a
centered complex-valued Gaussian vector field with correlated components, and
its covariance operator is a pseudo-differential operator. The well-posedness of
the direct source scattering problem is established and the regularity of the
electromagnetic field is given. For the inverse source scattering problem, the
micro-correlation strength matrix of the covariance operator is shown to be
uniquely determined by the high frequency limit of the expectation of the
electric field measured in an open bounded domain disjoint with the support of
the source. In particular, we show that the diagonal entries of the strength
matrix can be uniquely determined by only using the amplitude of the electric
field. Moreover, this result is extended to the almost surely sense by deducing
an ergodic relation for the electric field over the frequencies. 
\end{abstract}

\maketitle

\section{Introduction}

Inverse source scattering problems are to infer the information of the radiating
sources by using the measured wave fields generated by the unknown sources.
These problems arise naturally and have significant applications in many
scientific areas such as biomedical engineering, medical imaging, and optical
tomography \cite{ABF02, I90, FKM04, NOHTA07}. They have attracted much attention
by many researchers in both of the engineering and mathematical communities.
Consequently, a great number of computational and mathematical results are
available \cite{BLT10, BLZ20, IL18, AM06}. In particular,
modeled by Maxwell's equations, the inverse source scattering problem for
electromagnetic waves is an important research subject not only from the
viewpoint of engineering and industrial applications but also from
the mathematical aspect. For instance, the model can be used to determine the
source currents in the brain based on the electric or magnetic measurements on
the surface of the human head \cite{AM06}. As for the mathematical studies, we
refer to \cite{AM06} for the unique recovery of surface current density, to
\cite{V12, WMGL18, WSGLL19} for the unique recovery of volume current density,
and to \cite{BLZ20} for the stability analysis on the inverse source problems
for elastic and electromagnetic waves. 

So far, all the sources have been considered to be deterministic functions in
the existing mathematical models for the inverse electromagnetic source
scattering problem. However, in many practical situations, the source of
the system should be described by a random field instead of a deterministic
function due to the unpredictability of the surrounding environment or
uncertainties associated with the source itself \cite{D91}.

Compared with the deterministic counterparts, if the source is a random
field whose covariance operator is not regular enough, then the source would be
too rough to exist point-wisely. In this case, the source should be understood 
as a distribution, and the corresponding problem and its solution should be
studied in the distribution sense. For instance, for a $d$-dimensional problem,
if the random source is micro-locally isotropic with order $s$, i.e., its
covariance operator is a pseudo-differential operator with the principal symbol
$\phi(x)|\xi|^{-2s}$, where  $s\in(0, \frac d2]$ is a real number and
$\phi\in C_0^{\infty}(\mathbb{R}^d)$ is a positive function representing the
micro-correlation strength of the random source, then the source is a
distribution in the Sobolev space $W^{s-\frac d2-\epsilon}(\mathbb{R}^d)$ for
any $\epsilon>0$ (cf. \cite{LW}). There are already some mathematical studies on
the inverse random source problems for acoustic and elastic waves, which are to
recover the strength $\phi$ by using measured wave fields in a domain which has
a positive distance to the support of the source. If $s=0$, then the source is
as rough as a random field of the form $\sqrt{\phi}\dot{W}$ with $\dot{W}$ being
the white noise (cf. \cite{LW}). When the source is modeled by a white noise
type random field, the It\^o isometry can be resorted to recover the variance of
the random source. We refer to \cite{BCL16, LCL18} and \cite{BCL17} on the study
of the inverse acoustic and elastic source scattering problems, respectively. If
$s\in(0,\frac d2+1)$, the It\^o isometry is not valid any more since the
increments of the random source may be correlated. It turns out that the
micro-local analysis is effective to handle such a random source.
For the inverse acoustic random source scattering problems, we refer to
\cite{LHL} for the case $s\in[\frac d2,\frac d2+1)$ and to \cite{LW} for the
case $s\in(0,\frac d2+1)$. The results can be found in \cite{LHL, LL} on the
inverse elastic random source scattering problems with $s\in[\frac d2,\frac
d2+1)$. We refer to \cite{LLM1, LLM2} for related inverse problems on the
stochastic Schr\"odinger equation. To the best of our knowledge, the inverse
random source problem for Maxwell's equations is
completely open! This work initializes the mathematical study on the direct and
inverse source scattering problems for the stochastic Maxwell equations driven
by a random electric current density. 

In this paper, we consider the three-dimensional time-harmonic stochastic
Maxwell equations
\begin{equation}\label{eq:Maxwell}
\nabla\times\boldsymbol{E}={\rm i}k\boldsymbol{H},\quad 
\nabla\times\boldsymbol{H}={\rm i}k\boldsymbol{E}+\boldsymbol{J},
\end{equation}
where $k>0$ is the wavenumber, $\boldsymbol E$ and $\boldsymbol H$ are the
electric field and the magnetic field, respectively, and $\boldsymbol J$ is the
electric current density, which is assumed to be a complex-valued random vector
field defined on a complete probability space $(\Omega,\mathcal{F},\mathbb{P})$
with a compact support $\mathcal{O}\subset\mathbb{R}^3$. Moreover, the source
$\boldsymbol{J}$ is assumed to be micro-locally isotropic such that its
covariance operator is a pseudo-differential operator with the
principal symbol given by $A(x)|\xi|^{-2s}$, $s\in(0,\frac 52)$, where the
complex-valued matrix $A\in C_0^{\infty}(\mathbb R^3;\mathbb C^{3\times 3})$
describes the micro-correlation strength of the random source and the entries
are assumed to be smooth functions with compact supports contained in $\mathcal
O$. Hence we consider a more general principal symbol than that studied in
\cite{LHL, LL, LW}, where the principal symbol is characterized by a scalar
real-valued smooth function. Given the electric current density $\boldsymbol J$,
the direct scattering problem is to study the well-posedness of
\eqref{eq:Maxwell}; the inverse scattering problem is to determine $\boldsymbol
J$ from a knowledge of the electric field $\boldsymbol E$. Due to such a random
$\boldsymbol J$, both of the direct and inverse scattering problems are
challenging. 
  
The work contains three contributions. First, by considering an
equivalent problem, the well-posedness is established for \eqref{eq:Maxwell} in
the distribution sense. The regularity is given for both the electric field
$\boldsymbol E$ and the magnetic field $\boldsymbol H$. A key ingredient is to
find an appropriate function space for the electric current density
$\boldsymbol J$, which is required to satisfy a divergence free condition in
the weak sense. Second, we show that the micro-correlation strength matrix $A$
is uniquely determined by the high frequency limit of the expectation
of the electric field measured in a bounded open domain disjoint with the
support of $\boldsymbol J$. The result also implies that the diagonal entries of
the strength matrix $A$ can be uniquely determined by the high frequency limit
of the amplitude of the electric field, which is known as the phaseless data.
Third, if only the amplitude of the electric field is available, then we
show that the diagonal entries of the strength matrix $A$ can be uniquely
recovered by the energy of the electric field averaged over the frequency band
at a single realization of the random source, which indicates that it is
statistically stable to recover the strength matrix. The idea is to deduce an
ergodic relation for the electric field over the frequencies in order to obtain
such a strong result.

The paper is organized as follows. In Section 2, we address the direct source
scattering problem. The properties and assumptions are introduced for the
random source; the well-posedness of \eqref{eq:Maxwell} and the regularity of
the electromagnetic field are examined. Sections 3 and 4 are devoted to the
inverse source scattering problem. In Section 3, we discuss the uniqueness to
recover the micro-correlation strength matrix by using the expectation of the
electric field; while in Section 4, we present the uniqueness result by using
the amplitude of the electric field at a single path. The paper is concluded
with some general remarks and directions for future work in Section 5.

\section{Direct scattering problem}

In this section, we introduce some basic notation for complex isotropic
Gaussian random fields and establish the well-posedness for the direct
scattering problem if the current density is a complex-valued isotropic
Gaussian random field. 

\subsection{Complex isotropic Gaussian random fields}
\label{sec:comRF}

Let $\boldsymbol{J}(x)$ be a complex-valued Gaussian random vector field.
It can be determined by the mean
$\boldsymbol{m}(x)=\mathbb{E}[\boldsymbol{J}(x)]$,
the covariance 
\[
C_{\boldsymbol{J}}( x, y)=\mathbb{E}\left[(\boldsymbol{J}(
x)-\mathbb{E}[\boldsymbol{J}( x)])(\overline{\boldsymbol{J}(
y)-\mathbb{E}[\boldsymbol{J}( y)]})^\top\right],
\] 
and the relation 
\[
R_{\boldsymbol{J}}( x, y)=\mathbb{E}\left[(\boldsymbol{J}(
x)-\mathbb{E}[\boldsymbol{J}( x)])(\boldsymbol{J}( y)-\mathbb{E}[\boldsymbol{J}(
y)])^\top\right] 
\] 
if they exist. It is easy to verify the following properties for the complex-valued covariance
and relation matrix functions: for any $x, y\in\mathbb R^3$,
\begin{itemize}

\item[(i)] $C^*_{\boldsymbol{J}}(y, x)=\overline{C_{\boldsymbol{J}}( y,
x)}^\top=C_{\boldsymbol{J}}( x,
y)$;

\item[(ii)] $\overline{R_{\boldsymbol{J}}( x, y)}=R_{\overline{\boldsymbol{J}}}(
x, y)$ and $R_{\boldsymbol{J}}( y, x)^\top=R_{\boldsymbol{J}}( x, y)$;

\item[(iii)] $C_{\boldsymbol{J}}( x, y)=R_{\boldsymbol{J}}( x, y)$ if
$\boldsymbol{J}(x)$ is real-valued;

\item[(iv)] $R_{\boldsymbol{J}}( x, y)={\bf 0}$ if the real and imaginary parts
of $\boldsymbol{J}$ are independent and identically distributed.

\end{itemize}

For a complex-valued Gaussian random vector $\boldsymbol{Z}=\boldsymbol{X}+{\rm
i}\boldsymbol{Y}$, the variance matrices of $\boldsymbol{X}$ and
$\boldsymbol{Y}$, and the covariance matrices between $\boldsymbol{X}$ and
$\boldsymbol{Y}$ are uniquely determined by the covariance and relation of
$\boldsymbol{Z}$, and vice versa. More precisely, let 
$V_{\boldsymbol{X}\boldsymbol{X}}$ and $V_{\boldsymbol{Y}\boldsymbol{Y}}$ be the
variance matrices of $\boldsymbol{X}$ and $\boldsymbol{Y}$, and let
$V_{\boldsymbol{X}\boldsymbol{Y}}$ and $V_{\boldsymbol{Y}\boldsymbol{X}}$ be the
covariance matrices between $\boldsymbol{X}$ and $\boldsymbol{Y}$. 
Denote by $C$ and $R$ the covariance and relation matrices of $\boldsymbol{Z}$.
Then it is easy to note that 
\begin{align*}
V_{\boldsymbol{X}\boldsymbol{X}}=\frac12\Re[C+R],\quad
V_{\boldsymbol{Y}\boldsymbol{Y}}=\frac12\Re[C-R],\quad
V_{\boldsymbol{X}\boldsymbol{Y}}=\frac12\Im[R-C],\quad
V_{\boldsymbol{Y}\boldsymbol{X}}=\frac12\Im[R+C]. 
\end{align*}
where $\Re[\cdot]$ and $\Im[\cdot]$ stand for the real and imaginary parts of a
complex number or matrix, respectively. Conversely, we have from a simple
calculation that 
\begin{align*}
C=V_{\boldsymbol{X}\boldsymbol{X}}+V_{\boldsymbol{Y}\boldsymbol{Y}}+{\rm
i}(V_{\boldsymbol{Y}\boldsymbol{X}}-V_{\boldsymbol{X}\boldsymbol{Y}}),\quad
R=V_{\boldsymbol{X}\boldsymbol{X}}-V_{\boldsymbol{Y}\boldsymbol{Y}}+{\rm
i}(V_{\boldsymbol{Y}\boldsymbol{X}}+V_{\boldsymbol{X}\boldsymbol{Y}}).
\end{align*}

If $\boldsymbol{J}$ is not regular enough, the covariance and relation
matrix functions may not exist point-wisely. Hence, it is necessary to give
rigorous definitions of the covariance and the relation of $\boldsymbol{J}$. Let
$\mathcal{D}:=\mathcal{D}(\mathbb{R}^3)$ be the space of test functions on
$\mathbb{R}^3$, which is $C_0^{\infty}(\mathbb{R}^3)$ equipped with a locally
convex topology. Denote by $\mathcal{D}':=\mathcal{D}'(\mathbb{R}^3)$ the space
of distributions on $\mathbb{R}^3$, which is the dual space of $\mathcal{D}$
equipped with the weak-star topology. Denote by $\langle\cdot,\cdot\rangle$ the
dual product between $(\mathcal{D}')^3$ and ${\mathcal D}^3$.
Then the derivative of a distribution $\boldsymbol{\psi}\in({\mathcal
D}')^3$ is defined by
\[
\langle \partial_{x_j}\boldsymbol{\psi},\boldsymbol{\varphi}\rangle=-\langle
\boldsymbol{\psi},\partial_{x_j}\boldsymbol{\varphi}\rangle\quad\forall\,
\boldsymbol{\varphi}\in{\mathcal D}^3
\]
for $j=1,2,3.$ We refer to \cite{AF03} and references cited therein
for more details about distributions. Define the covariance operator
$Q_{\boldsymbol{J}}^C$ and the relation operator
$Q_{\boldsymbol{J}}^R$ by
\begin{align*}
\langle
Q_{\boldsymbol{J}}^C\boldsymbol{\varphi},\boldsymbol{\psi}\rangle:=&\int_{
\mathbb{R}^3}\int_{\mathbb{R}^3}\boldsymbol \psi^*( y)
\mathbb{E}\left[\boldsymbol{J}( x)\boldsymbol{J}^*(
y)\right]\boldsymbol{\varphi}( x)d x d y\nonumber\\
=&\int_{\mathbb{R}^3}\int_{\mathbb{R}^3}\boldsymbol \psi^*( y)
C_{\boldsymbol{J}}(x, y)\boldsymbol{\varphi}(x)d x d y
\end{align*}
and 
\begin{align*}
\langle
Q_{\boldsymbol{J}}^R\boldsymbol{\varphi},\boldsymbol{\psi}\rangle:=&\int_{
\mathbb{R}^3}\int_{\mathbb{R}^3}\boldsymbol \psi^*( y)
\mathbb{E}[\boldsymbol{J}( x)\boldsymbol{J}(
y)^\top]\boldsymbol{\varphi}( x)d x d y\nonumber\\
=&\int_{\mathbb{R}^3}\int_{\mathbb{R}^3}\boldsymbol\psi^*( y)
R_{\boldsymbol{J}}( x, y)\boldsymbol{\varphi}( x)d x d y
\end{align*}
for any $\boldsymbol{\varphi},\boldsymbol{\psi}\in{\mathcal D}^3$, where the
star denotes the complex conjugate. 

Hereafter, we use the notation $W^{r,p}:=W^{r,p}(\mathbb{R}^3)$ and
$C^{r,\alpha}:=C^{r,\alpha}(\mathbb{R}^3)$ for simplicity. For any space $X$, we
denote by $\boldsymbol{X}$ the Cartesian product vector space $X^3$ for
convenience. Without loss of
generality, we may assume that the current density $\boldsymbol J$ is a centered
Gaussian random field. If not, it is essentially a deterministic inverse source
problem to determine the nonzero mean, which has been well studied in
\cite{BLZ20}. In addition, the current density $\boldsymbol J$ is required to
satisfying the following conditions. 

\begin{assumption}\label{as:J}
Let $\boldsymbol{J}\in\boldsymbol{\mathcal{D}}'$ be a complex-valued isotropic centered
Gaussian random vector field compactly supported in 
$\mathcal{O}\subset\mathbb{R}^3$ with the covariance kernel 
$C_{\boldsymbol{J}}(x, y)$ and the relation kernel $R_{\boldsymbol{J}}( x, y)$
depending only on $|x- y|$. Assume that 
\begin{itemize}
\item[(i)] the real and imaginary parts of $\boldsymbol{J}$ are independent and
identically distributed with the relation operator $Q_{\boldsymbol{J}}^R=0$;
\item[(ii)] the covariance operator $Q_{\boldsymbol{J}}^C$ defined through the
kernel $C_{\boldsymbol{J}}$ is a pseudo-differential operator of order
$s\in[0,\frac52)$, which implies that $Q_{\boldsymbol{J}}^C$ has a principal
symbol $A(x)|\xi|^{-2s}$, where $A(x)\in C_0^{\infty}(\mathbb R^3;\mathbb C^{3\times 3})$ is a
smooth matrix function with a compact support contained in $\mathcal{O}$.
\end{itemize}
\end{assumption}

Given the current density $\boldsymbol J$ satisfying Assumption \ref{as:J}, the
direct scattering problem is to study the well-posedness of  Maxwell's
equations \eqref{eq:Maxwell}. We intend to answer the following questions: what
are the conditions of $\boldsymbol J$ such that Maxwell's equations
\eqref{eq:Maxwell} admit a unique solution $(\boldsymbol E, \boldsymbol H)$?
What are the regularity for $\boldsymbol E$ and $\boldsymbol H$ if there is a
unique solution? For the inverse scattering problem, the goal is not to
determine the random current density $\boldsymbol J$ but to determine the matrix $A$, which
represents the micro-correlation strength of the current density $\boldsymbol
J$, from a knowledge of the measured electric field $\boldsymbol E$. We are
concerned with the uniqueness for the inverse scattering problem: can $A$ or
what part of $A$ be uniquely determined by the available data? To give a
detailed explanation of $A$, we rewrite $\boldsymbol{J}=(J_1,J_2,J_3)^\top$ by
its components. Then a simple calculation yields that 
\[
\boldsymbol{J}( x)\boldsymbol{J}^*( y)\stackrel{d}{=}\left[\begin{array}{ccc}
J_1( x)\overline{J_1( y)}&\cdots&J_1( x)\overline{J_3( y)}\\
\vdots&\ddots &\vdots\\
J_3( x)\overline{J_1( y)}&\cdots&J_3( x)\overline{J_3( y)}
\end{array}\right],
\]
where $\stackrel{d}{=}$ means ``equals in distribution". 
As a result, each entry in $A(x)$ is determined by the strength of
covariance operator between $J_j$ and $J_l$ with $j,l=1,2,3$.

\subsection{Well-posedness} 

If the current density $\boldsymbol{J}\in\boldsymbol{\mathcal{D}}'$ is a
distribution, then Maxwell's equations \eqref{eq:Maxwell} do not hold
point-wisely any more. To establish the well-posedness of \eqref{eq:Maxwell} in
some proper sense, we impose the weak Silver--M\"uller radiation condition
\[
\lim_{r\to\infty}\int_{| x|=r}\left(\boldsymbol{H}\times\frac{ x}{|
x|}-\boldsymbol{E}\right)\cdot\boldsymbol{\phi} ds=0\quad
\forall\,\boldsymbol{\phi}\in\boldsymbol{\mathcal{D}}, 
\]
which characterizes the behavior of solutions to \eqref{eq:Maxwell} at
infinity.

Eliminating the magnetic field $\boldsymbol{H}$ from \eqref{eq:Maxwell},
multiplying a test function $\boldsymbol{\phi}\in \boldsymbol{\mathcal{D}}$,
and integrating over $\mathbb R^3$, we get
\begin{align*}
\int_{\mathbb{R}^3}(\nabla\times(\nabla\times\boldsymbol{E}))\cdot\boldsymbol{
\phi} d x-k^2\int_{\mathbb{R}^3}\boldsymbol{E}\cdot\boldsymbol{\phi} d x={\rm
i}k\int_{\mathbb{R}^3}\boldsymbol{J}\cdot\boldsymbol{\phi} d
x\quad\forall\,\boldsymbol{\phi}\in \boldsymbol{\mathcal{D}},
\end{align*}
which, by derivatives of distributions, leads to 
\begin{align}\label{eq:disE}
\int_{\mathbb{R}^3}(-\Delta-k^2)\boldsymbol{E}\cdot\boldsymbol{\phi} d
x-\int_{\mathbb{R}^3}(\nabla\cdot\boldsymbol{E})(\nabla\cdot\boldsymbol{\phi}) d
x={\rm i}k\int_{\mathbb{R}^3}\boldsymbol{J}\cdot\boldsymbol{\phi} d
x\quad\forall\,\boldsymbol{\phi}\in \boldsymbol{\mathcal{D}}.
\end{align}

Moreover, for any $\boldsymbol{\phi}\in\boldsymbol{\mathcal{D}}$, it
follows from the second equation in \eqref{eq:Maxwell} that we get
$\nabla(\nabla\cdot\boldsymbol{\phi})\in\boldsymbol{\mathcal{D}}$ and hence
\[
\int_{\mathbb{R}^3}(\nabla\times\boldsymbol{H})\cdot(\nabla(\nabla\cdot\boldsymbol{\phi}))d x=-{\rm i}k\int_{\mathbb{R}^3}\boldsymbol{E}\cdot(\nabla(\nabla\cdot\boldsymbol{\phi}))d x+\int_{\mathbb{R}^3}\boldsymbol{J}\cdot(\nabla(\nabla\cdot\boldsymbol{\phi}))d x,
\]
which implies
\begin{align}\label{eq:condE}
\int_{\mathbb{R}^3}(\nabla\cdot\boldsymbol{E})(\nabla\cdot\boldsymbol{\phi}) d
x=\frac{\rm i}k\int_{\mathbb{R}^3}\boldsymbol{J}
\cdot(\nabla(\nabla\cdot\boldsymbol{\phi}))d
x\quad\forall\,\boldsymbol{\phi}\in\boldsymbol{\mathcal{D}}.
\end{align}

Define the space
\[
\mathbb{X}:=\Big\{\boldsymbol{U}\in\boldsymbol{\mathcal D}':\int_{\mathbb{R}
^3}\boldsymbol{U}\cdot\left(\nabla(\nabla\cdot\boldsymbol{\phi})\right) d
x=0 \quad \forall\,\boldsymbol{\phi}\in \boldsymbol{\mathcal{D}}\Big\}.
\]
Apparently, $\mathbb{X}$ is non-empty since all divergence free vector fields
are included. If $\boldsymbol{J}\in\mathbb{X}$, we obtain from
\eqref{eq:disE}--\eqref{eq:condE} that 
\begin{align*}
\int_{\mathbb{R}^3}\left[(\Delta+k^2)\boldsymbol{E}+{\rm
i}k\boldsymbol{J}\right]\cdot\boldsymbol{\phi} d
x=0\quad\forall\,\boldsymbol{\phi}\in\boldsymbol{\mathcal{D}},
\end{align*}
which indicates that the following Helmholtz equation holds in the distribution
sense: 
\begin{align}\label{eq:Helm}
(\Delta+k^2)\boldsymbol{E}=-{\rm i}k\boldsymbol{J}.
\end{align}

\begin{theorem}\label{tm:well-posed}
Let $p\in(\frac32,2]$, $s\in(\frac 3p-\frac12,\frac32]$ and $H=s-\frac32\in(\frac3p-2,0]$. Assume that
$\boldsymbol{J}\in\mathbb{X}\cap\boldsymbol{W}^{H-\epsilon,p}_{comp}$ for any
$\epsilon>0$ with a compact support contained in $\mathcal{O}$. Then
\eqref{eq:Helm} admits a unique solution
\begin{align*}
\boldsymbol{E}( x)={\rm i}k\int_{\mathbb{R}^3}\Phi_k( x, y)\boldsymbol{J}( y)d y\quad\text{a.s.}
\end{align*}
in $\mathbb{X}\cap\boldsymbol{W}_{loc}^{-H+\epsilon,q}$ with $q$ satisfying $\frac1p+\frac1q=1$ and 
\[
\Phi_k( x, y)=\frac{e^{{\rm i}k| x- y|}}{4\pi| x- y|}
\]
being the fundamental solution for the three-dimensional Helmholtz equation. 
\end{theorem}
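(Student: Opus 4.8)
The plan is to reduce \eqref{eq:Helm} to a deterministic problem solved pathwise. By hypothesis, for almost every $\omega\in\Omega$ the realization $\boldsymbol J(\cdot,\omega)$ belongs to $\mathbb X\cap\boldsymbol W^{H-\epsilon,p}_{comp}$ and is supported in $\mathcal O$; since the assertion is ``a.s.'', it suffices to fix such a deterministic $\boldsymbol J$ and construct the unique $\boldsymbol E$. I would first note that $\boldsymbol J\in\mathbb X$ is equivalent to $\nabla(\nabla\cdot\boldsymbol J)=0$ in $\boldsymbol{\mathcal D}'$, which follows from the definition of $\mathbb X$ by transferring the two derivatives onto $\boldsymbol J$. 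Then I would set $\boldsymbol E:={\rm i}k\,\Phi_k*\boldsymbol J$, a well-defined distribution since $\Phi_k\in L^1_{loc}(\mathbb R^3)$ and $\boldsymbol J$ has compact support. Using $(\Delta+k^2)\Phi_k=-\delta$ together with the fact that convolution with $\Phi_k$ commutes with $(\Delta+k^2)$, $\nabla$ and $\nabla\cdot$, a routine test-function computation gives $(\Delta+k^2)\boldsymbol E=-{\rm i}k\boldsymbol J$ and $\nabla(\nabla\cdot\boldsymbol E)={\rm i}k\,\Phi_k*\nabla(\nabla\cdot\boldsymbol J)=0$, so $\boldsymbol E$ solves \eqref{eq:Helm} and lies in $\mathbb X$. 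Because $\boldsymbol J$ is compactly supported, outside $\mathcal O$ the field $\boldsymbol E$ is the classical radiating volume potential, and with $\boldsymbol H:=({\rm i}k)^{-1}\nabla\times\boldsymbol E$ the pair $(\boldsymbol E,\boldsymbol H)$ satisfies the Silver--M\"uller radiation condition, in particular its weak form.

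The core of the argument is the regularity of $\boldsymbol E$. I would split $\Phi_k(x,y)=\frac{1}{4\pi|x-y|}+g_k(x-y)$, where $g_k$ is real-analytic on $\mathbb R^3$; convolution with $g_k$ sends any compactly supported distribution to a $C^\infty$ function, hence is harmless. Convolution with the Newtonian kernel $\frac1{4\pi|x-y|}$ is a classical pseudo-differential operator of order $-2$ (symbol $|\xi|^{-2}$), so it maps $\boldsymbol W^{r,p}$-functions supported in $\mathcal O$ into $\boldsymbol W^{r+2,p}_{loc}$ for every $r\in\mathbb R$ and $1<p<\infty$. Taking $r=H-\epsilon$ yields $\boldsymbol E\in\boldsymbol W^{H-\epsilon+2,p}_{loc}=\boldsymbol W^{s+\frac12-\epsilon,p}_{loc}$. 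To reach the claimed space I would then apply, on a compact neighborhood, the Sobolev embedding $\boldsymbol W^{s+\frac12-\epsilon,p}_{loc}\hookrightarrow\boldsymbol W^{-H+\epsilon,q}_{loc}$, which is valid since $q\ge p$ and $s+\frac12-\epsilon-\frac3p\ge(\frac32-s+\epsilon)-\frac3q$; using $\frac1p+\frac1q=1$ the latter is equivalent to $s\ge\frac3p-1+\epsilon$, guaranteed by the standing assumption $s>\frac3p-\frac12$ for $\epsilon<\frac12$. This is precisely where the ranges enter: $s\le\frac32$ keeps $H\le0$ and makes $\Phi_k*\boldsymbol J$ only mildly smooth, $s>\frac3p-\frac12$ is exactly what the embedding requires, and $p>\frac32$ is what makes the admissible $s$-interval nonempty (while $p\le2$ forces $q\ge2$, the natural integrability for the field).

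For uniqueness, I would take two solutions $\boldsymbol E_1,\boldsymbol E_2$ in $\mathbb X\cap\boldsymbol W^{-H+\epsilon,q}_{loc}$ satisfying the radiation condition and put $\boldsymbol w:=\boldsymbol E_1-\boldsymbol E_2$, so $(\Delta+k^2)\boldsymbol w=0$ in $\boldsymbol{\mathcal D}'$. Interior elliptic regularity for the Helmholtz operator upgrades $\boldsymbol w$ to a $C^\infty$ (indeed real-analytic) vector field, and $\boldsymbol H_{\boldsymbol w}:=({\rm i}k)^{-1}\nabla\times\boldsymbol w$ still obeys the Silver--M\"uller condition; Rellich's uniqueness theorem then forces $\boldsymbol w\equiv0$. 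Combining the three steps gives the theorem, with the ``a.s.'' clause inherited from the pathwise construction.

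The step I expect to be the main obstacle is the regularity bookkeeping in the second paragraph: one must combine the order-$2$ gain of the Helmholtz volume potential with a Sobolev embedding that simultaneously trades the exponent $p$ for its conjugate $q$, carried out in local spaces with attention to the compact support of $\boldsymbol J$, and then check that the hypotheses on $(p,s)$ are exactly those rendering both the mapping property and the embedding admissible. The remaining parts---the distributional verification that the volume potential solves \eqref{eq:Helm} and lies in $\mathbb X$, and the Rellich-type uniqueness---are essentially standard.
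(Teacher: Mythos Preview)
Your argument is correct, but it takes a different route from the paper's own proof.

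The paper does not prove existence, uniqueness, or the $\boldsymbol W^{-H+\epsilon,q}_{loc}$ regularity at all: it simply cites \cite{LW} for the scalar Helmholtz equation and notes that \eqref{eq:Helm} decouples componentwise, so the only new work is to check that $\boldsymbol E\in\mathbb X$. For that step the paper proceeds by a direct weak-form computation: starting from $\int\boldsymbol E\cdot\nabla(\nabla\cdot\boldsymbol\phi)\,dx$, it swaps the order of integration, uses $\nabla_x\Phi_k=-\nabla_y\Phi_k$ to move both derivatives onto the $y$-variable, and arrives at $\int_{\mathcal O}\nabla_y(\nabla_y\cdot\boldsymbol f)\cdot\boldsymbol J\,dy$ with $\boldsymbol f(y)=\int\Phi_k(x,y)\boldsymbol\phi(x)\,dx$. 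Since $\boldsymbol f$ is smooth on $\mathcal O$ but not compactly supported, the paper extends it to some $\widetilde{\boldsymbol f}\in\boldsymbol{\mathcal D}$ agreeing with $\boldsymbol f$ on $\mathcal O$ and then invokes $\boldsymbol J\in\mathbb X$ against $\widetilde{\boldsymbol f}$.

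Your approach is more self-contained and conceptually cleaner: you observe directly that $\boldsymbol J\in\mathbb X$ is equivalent to $\nabla(\nabla\cdot\boldsymbol J)=0$ in $\boldsymbol{\mathcal D}'$, and since convolution with $\Phi_k$ commutes with differential operators you get $\nabla(\nabla\cdot\boldsymbol E)={\rm i}k\,\Phi_k*\nabla(\nabla\cdot\boldsymbol J)=0$ in one line, with no cutoff needed. For the regularity you actually supply an argument (Newtonian potential gives a gain of two derivatives in $\boldsymbol W^{\cdot,p}_{loc}$, then embed into $\boldsymbol W^{-H+\epsilon,q}_{loc}$ using $s>\tfrac3p-\tfrac12$), whereas the paper outsources this entirely to \cite{LW}. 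Your route makes transparent exactly where the constraints on $p$ and $s$ enter; the paper's route is shorter on the page but hides those details in the reference.
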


\begin{proof}
It has been shown in \cite{LW} that the scalar Helmholtz equation in
$\mathbb{R}^3$ has a unique solution in $W^{-H+\epsilon,q}$, which implies the
well-posedness of \eqref{eq:Helm} in $\boldsymbol{W}^{-H+\epsilon,q}$. It then
suffices to show $\boldsymbol{E}\in\mathbb{X}$. In fact, noting
$\nabla_{ x}\Phi_k( x, y)=-\nabla_{ y}\Phi_k( x, y)$, we have for any
$\boldsymbol{\phi}\in\boldsymbol{\mathcal{D}}$ that 
\begin{align}\label{wp-s1}
\int_{\mathbb{R}^3}\boldsymbol{E}( x)\cdot\nabla_{ x}\left(\nabla_{
x}\cdot\boldsymbol{\phi}\right)d x
&= {\rm i}k\int_{\mathbb{R}^3}\left[\int_{\mathbb{R}^3}\Phi_k( x,
y)\nabla_{ x}\left(\nabla_{ x}\cdot\boldsymbol{\phi}\right)d
x\right]\cdot\boldsymbol{J}( y)d y\notag\\
&= {\rm i}k\int_{\mathbb{R}^3}\nabla_{ y}\left[\int_{\mathbb{R}^3}\Phi_k( x,
y)\left(\nabla_{ x}\cdot\boldsymbol{\phi}\right)d x\right]\cdot\boldsymbol{J}(
y)d y\notag\\
&= -{\rm i}k\int_{\mathbb{R}^3}\nabla_{
y}\left[\int_{\mathbb{R}^3}\left(\nabla_{ x}\Phi_k( x,
y)\right)\cdot\boldsymbol{\phi}( x)d x\right]\cdot\boldsymbol{J}( y)d y\notag\\
&= {\rm i}k\int_{\mathbb{R}^3}\nabla_{ y}\left[\int_{\mathbb{R}^3}\left(\nabla_{
y}\Phi_k( x, y)\right)\cdot\boldsymbol{\phi}( x)d x\right]\cdot\boldsymbol{J}(
y)d y\notag\\
&= {\rm i}k\int_{\mathcal{O}}\left(\nabla_{ y}\left(\nabla_{
y}\cdot\left[\int_{\mathbb{R}^3}\Phi_k( x, y)\boldsymbol{\phi}( x)d
x\right]\right)\right)\cdot\boldsymbol{J}( y)d y.
\end{align}
Let
\[
 \boldsymbol{f}( y)=\int_{\mathbb{R}^3}\Phi_k( x, y)\boldsymbol{\phi}( x)d
x,\quad  y\in\mathcal{O}
\]
and choose a sufficiently large ball $B$ such that $\mathcal{O}\subsetneq B$.
Define a smooth extension $\widetilde{\boldsymbol{f}}$ on $\mathbb{R}^3$ such
that
\[
\widetilde{\boldsymbol{f}}( y)=
\begin{cases}
\boldsymbol{f}( y),&\quad y\in\mathcal{O},\\
\boldsymbol{0},&\quad y\in\mathbb{R}^3\setminus \overline{B}.
\end{cases}
\]
It is easy to note that
$\widetilde{\boldsymbol{f}}\in\boldsymbol{\mathcal{D}}$. Since 
$\boldsymbol{J}\in\mathbb{X}$, we have from \eqref{wp-s1} that
\begin{align*}
\int_{\mathbb{R}^3}\boldsymbol{E}( x)\cdot\nabla_{x}\left(\nabla_{
x}\cdot\boldsymbol{\phi}\right)d x
&={\rm i}k\int_{\mathcal{O}}\left(\nabla_{y}\left(\nabla_{
y}\cdot\boldsymbol{f}( y)\right)\right)\cdot\boldsymbol{J}( y)d y\\
&={\rm i}k\int_{\mathbb{R}^3}\left(\nabla_{ y}\left(\nabla_{
y}\cdot\widetilde{\boldsymbol{f}}( y)\right)\right)\cdot\boldsymbol{J}( y)d y\\
&=0,
\end{align*}
which completes the proof.
\end{proof}

\begin{corollary}
Under the assumptions in Theorem \ref{tm:well-posed}, the Helmholtz equation
\eqref{eq:Helm} together with \begin{align}\label{eq:Helm2}
\nabla\times\boldsymbol{E}={\rm i}k\boldsymbol{H}
\end{align}
is equivalent to Maxwell's equations \eqref{eq:Maxwell} in the distribution
sense. 

Moreover, it holds
$\boldsymbol{H}\in \left(\boldsymbol{W}^{H-\epsilon,p}(curl)\right)'$ which
is the dual space of $\boldsymbol{W}^{H-\epsilon,p}(curl)$ equipped with
norm
\[
\|\boldsymbol{h}\|_{\boldsymbol{W}^{H-\epsilon,p}(curl)}=\left(\|\boldsymbol{h}
\|_{\boldsymbol{W}^{H-\epsilon,p}}^2+\|\nabla\times\boldsymbol{h}\|_{\boldsymbol
{W}^{H-\epsilon,p}}^2\right)^{\frac12}.
\]
\end{corollary}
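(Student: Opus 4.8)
The plan is to handle the three assertions of the corollary separately, leaning on the computation already displayed between \eqref{eq:disE} and \eqref{eq:Helm}. The forward implication of the equivalence is essentially contained in that derivation: if $(\boldsymbol E,\boldsymbol H)$ solves \eqref{eq:Maxwell} in the distribution sense, then eliminating $\boldsymbol H$ and testing against $\boldsymbol\phi\in\boldsymbol{\mathcal D}$ gives \eqref{eq:disE}, testing the second equation of \eqref{eq:Maxwell} against $\nabla(\nabla\cdot\boldsymbol\phi)$ gives \eqref{eq:condE}, and since $\boldsymbol J\in\mathbb X$ the right-hand side of \eqref{eq:condE} vanishes, so $\int_{\mathbb R^3}(\nabla\cdot\boldsymbol E)(\nabla\cdot\boldsymbol\phi)\,dx=0$ for every $\boldsymbol\phi$; substituting this into \eqref{eq:disE} yields exactly \eqref{eq:Helm}, while \eqref{eq:Helm2} is the first equation of \eqref{eq:Maxwell}.

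For the backward implication I would start from $\boldsymbol E$ solving \eqref{eq:Helm} with $\boldsymbol H$ given by \eqref{eq:Helm2}, so that the first equation of \eqref{eq:Maxwell} holds by construction, and recover the second. The key intermediate step is $\nabla\cdot\boldsymbol E=0$ in $\boldsymbol{\mathcal D}'$: using the explicit representation $\boldsymbol E={\rm i}k\,\Phi_k*\boldsymbol J$ of Theorem~\ref{tm:well-posed} together with $\boldsymbol J\in\mathbb X$ compactly supported --- which, by the integration-by-parts characterization of $\mathbb X$, forces $\nabla(\nabla\cdot\boldsymbol J)=\boldsymbol 0$, hence $\nabla\cdot\boldsymbol J$ a compactly supported constant, hence $\nabla\cdot\boldsymbol J=0$ --- one gets $\nabla\cdot\boldsymbol E={\rm i}k\,\Phi_k*(\nabla\cdot\boldsymbol J)=0$. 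Then applying $\nabla\times$ to \eqref{eq:Helm2}, invoking the distributional identity $\nabla\times(\nabla\times\boldsymbol E)=\nabla(\nabla\cdot\boldsymbol E)-\Delta\boldsymbol E=-\Delta\boldsymbol E$, and using \eqref{eq:Helm} to substitute for $\Delta\boldsymbol E$ expresses $\nabla\times\boldsymbol H$ through $\boldsymbol E$ and $\boldsymbol J$ and reproduces the second equation of \eqref{eq:Maxwell}; the weak Silver--M\"uller radiation condition is inherited from the Sommerfeld decay of $\Phi_k*\boldsymbol J$.

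For the regularity claim, given $\boldsymbol h$ in (a dense, compactly supported subset of) $\boldsymbol W^{H-\epsilon,p}(curl)$ I would use the curl integration-by-parts identity to write $\langle\boldsymbol H,\boldsymbol h\rangle=({\rm i}k)^{-1}\langle\nabla\times\boldsymbol E,\boldsymbol h\rangle=({\rm i}k)^{-1}\langle\boldsymbol E,\nabla\times\boldsymbol h\rangle$, and then estimate, using that $\boldsymbol E\in\boldsymbol W_{loc}^{-H+\epsilon,q}$ by Theorem~\ref{tm:well-posed} and that $\boldsymbol W^{-H+\epsilon,q}$ is the dual of $\boldsymbol W^{H-\epsilon,p}$ because $\frac1p+\frac1q=1$, to obtain $|\langle\boldsymbol H,\boldsymbol h\rangle|\le \frac1k\|\boldsymbol E\|_{\boldsymbol W^{-H+\epsilon,q}}\|\nabla\times\boldsymbol h\|_{\boldsymbol W^{H-\epsilon,p}}\le C\|\boldsymbol h\|_{\boldsymbol W^{H-\epsilon,p}(curl)}$; by density $\boldsymbol H$ then extends to a bounded linear functional on $\boldsymbol W^{H-\epsilon,p}(curl)$, i.e.\ $\boldsymbol H\in(\boldsymbol W^{H-\epsilon,p}(curl))'$.

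The algebraic manipulations are routine; the care is needed purely at the level of distribution theory with negative smoothness --- justifying the commutation $\nabla\cdot(\Phi_k*\boldsymbol J)=\Phi_k*(\nabla\cdot\boldsymbol J)$ and the curl integration by parts for fields this rough, and giving meaning to the pairing $\langle\boldsymbol E,\nabla\times\boldsymbol h\rangle$ when $\boldsymbol E$ is only locally in $\boldsymbol W^{-H+\epsilon,q}$. This last point, which forces one to restrict to compactly supported $\boldsymbol h$ (or to a localized version of the $\boldsymbol W^{H-\epsilon,p}(curl)$ space) and argue by density, is the step I expect to be the main technical obstacle.
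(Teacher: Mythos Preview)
Your proposal is correct and follows essentially the same line as the paper: the forward implication from the derivation preceding \eqref{eq:Helm}, the backward implication via the vector identity $\nabla\times(\nabla\times\boldsymbol E)=\nabla(\nabla\cdot\boldsymbol E)-\Delta\boldsymbol E$, and the regularity of $\boldsymbol H$ by the duality pairing $\langle\boldsymbol H,\boldsymbol\phi\rangle=({\rm i}k)^{-1}\langle\boldsymbol E,\nabla\times\boldsymbol\phi\rangle$ bounded using $\boldsymbol E\in\boldsymbol W_{loc}^{-H+\epsilon,q}$.

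The one place you diverge is in killing the divergence term for the backward implication. You argue $\boldsymbol J\in\mathbb X$ plus compact support forces $\nabla\cdot\boldsymbol J=0$, then push this through the convolution to get $\nabla\cdot\boldsymbol E=0$, and you flag the commutation $\nabla\cdot(\Phi_k*\boldsymbol J)=\Phi_k*(\nabla\cdot\boldsymbol J)$ as a technical worry. The paper sidesteps this entirely: Theorem~\ref{tm:well-posed} already establishes $\boldsymbol E\in\mathbb X$, so when you test $(\Delta+k^2)\boldsymbol E=-{\rm i}k\boldsymbol J$ against $\boldsymbol\phi\in\boldsymbol{\mathcal D}$ and expand $\Delta\boldsymbol E=\nabla(\nabla\cdot\boldsymbol E)-\nabla\times(\nabla\times\boldsymbol E)$, the term $\int\boldsymbol E\cdot\nabla(\nabla\cdot\boldsymbol\phi)\,dx$ vanishes by definition of $\mathbb X$, with no convolution manipulation needed. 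Your route works and yields the slightly stronger $\nabla\cdot\boldsymbol E=0$, but the paper's is more direct and avoids exactly the distributional subtlety you were concerned about.
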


\begin{proof}
Based on the above discussions, it has been shown that any solution of
\eqref{eq:Maxwell} is also a solution of \eqref{eq:Helm}--\eqref{eq:Helm2}. Next
we show that if
$\boldsymbol{J}\in\mathbb{X}\cap\boldsymbol{W}^{H-\epsilon,p}_{comp}$ and
$\boldsymbol{E}\in\mathbb{X}\cap\boldsymbol{W}^{-H+\epsilon,q}$ is a solution of
\eqref{eq:Helm}--\eqref{eq:Helm2} as stated in Theorem \ref{tm:well-posed}, then
$\boldsymbol{E}$ also solves \eqref{eq:Maxwell}. 

Noting $\boldsymbol{E}\in\mathbb{X}$ and using \eqref{eq:Helm} and
\eqref{eq:Helm2}, we get for any $\boldsymbol{\phi}\in\boldsymbol{\mathcal{D}}$
that
\begin{align*}
-{\rm i}k\int_{\mathbb{R}^3}\boldsymbol{J}\cdot\boldsymbol{\phi}d
x&= \int_{\mathbb{R}^3}(\Delta+k^2)\boldsymbol{E}\cdot\boldsymbol{\phi}d x\\
&= \int_{\mathbb{R}^3}\left[-\nabla\times(\nabla\times\boldsymbol{E}
)+\nabla(\nabla\cdot\boldsymbol{E})+k^2\boldsymbol{E}\right]\cdot\boldsymbol{
\phi}d x\\
&= -\int_{\mathbb{R}^3}\nabla\times({\rm
i}k\boldsymbol{H})\cdot\boldsymbol{\phi}d
x+\int_{\mathbb{R}^3}\boldsymbol{E}\cdot\left(\nabla(\nabla\cdot\boldsymbol{\phi
})\right)d x+k^2\int_{\mathbb{R}^3}\boldsymbol{E}\cdot\boldsymbol{\phi}d x\\
&= -{\rm i}k\int_{\mathbb{R}^3}(\nabla\times\boldsymbol{H}+{\rm
i}k\boldsymbol{E})\cdot\boldsymbol{\phi}d x,
\end{align*} 
which implies that 
\[
\nabla\times\boldsymbol{H}=-{\rm i}k\boldsymbol{E}+\boldsymbol{J}. 
\]
Moreover, since
$\boldsymbol{E}\in\mathbb{X}\cap\boldsymbol{W}_{loc}^{-H+\epsilon,q}$, we have
for any $\boldsymbol{\phi}\in\boldsymbol{\mathcal{D}}$ that 
\begin{align*}
\left|\int_{\mathbb{R}^3}\boldsymbol{H}\cdot\boldsymbol{\phi}
dx\right| &= \left|\frac1{{\rm
i}k}\int_{\mathbb{R}^3}(\nabla\times\boldsymbol{E})\cdot\boldsymbol{\phi}
dx\right|=\frac1k\left|\int_{\mathbb{R}^3}\boldsymbol{E}
\cdot(\nabla\times\boldsymbol{\phi})dx\right|\\
&\le \frac1k\|\boldsymbol{E}\|_{\boldsymbol{W}^{-H+\epsilon,q}}
\|\nabla\times\boldsymbol{\phi}\|_{\boldsymbol{W}^{H-\epsilon,p}}\\
&\le \frac1k\|\boldsymbol{E}\|_{\boldsymbol{W}^{-H+\epsilon,q}}\|\boldsymbol{
\phi}\|_{\boldsymbol{W}^{H-\epsilon,p}(curl)},
\end{align*}
which completes the proof.
\end{proof}

It should be pointed out that if $\boldsymbol{J}\in\mathbb{X}$ satisfies
Assumption \ref{as:J} with $s\in(\frac3p-\frac12,\frac32]$, then it also holds
$\boldsymbol{J}\in\boldsymbol{W}^{H-\epsilon,p}$ with $H=s-\frac32$ and $p>1$
according to Lemma 2.6 in \cite{LW}, i.e., the assumptions in Theorem
\ref{tm:well-posed} are satisfied. If $\boldsymbol{J}\in\mathbb{X}$ satisfies
Assumption \ref{as:J} with $s\in(\frac32,\frac52)$, the current density 
$\boldsymbol{J}$ turns to be smoother
such that $\boldsymbol{J}\in\boldsymbol{C}^{0,\alpha}$ for all
$\alpha\in(0,s-\frac32)$ according to Lemma 2.6 in \cite{LW}. The well-posedness
of the problem in this case has been investigated in \cite{M03}. Therefore, we
only need to consider the current density $\boldsymbol{J}$ which satisfies
Assumption \ref{as:J}.

\section{Inverse scattering problem}

This section addresses the inverse scattering problem. According to Assumption
\ref{as:J}, the centered Gaussian random field $\boldsymbol{J}$ is determined by
its covariance operator $Q_{\boldsymbol{J}}^C$. To recover the strength matrix
$A(x)$ of the operator $Q_{\boldsymbol{J}}^C$, it is required to recover the
strength of the covariance operator between $J_j$ and $J_l$,
$j,l=1,2,3$, where $\boldsymbol{J}=(J_1,J_2,J_3)^\top$. For convenience,  we
denote by $a_{jl}( x)=a_{jl}^{\rm r}( x)+{\rm i}a_{jl}^{\rm i}(x)$ the
$(j,l)$-entry of the strength matrix $A(x)$. We discuss the covariance for each
component of $\boldsymbol J$ and the covariance between different components of
$\boldsymbol J$, separately.  

\subsection{Covariance for each component of $\boldsymbol{J}$}
\label{sec:3.1}

First, we consider the covariance operator for each component of
$\boldsymbol{J}$. By Theorem \ref{tm:well-posed}, the energy of each of the
components of $\boldsymbol{E}$ is 
\begin{align*}
\mathbb{E}|E_j(x)|^2 &= k^2\int_{\mathbb{R}^3}\int_{\mathbb{R}^3}\Phi_k( x,
y)\overline{\Phi_k( x, z)}\mathbb{E}[J_j( y)\overline{J_j( z)}]d y d
z\\
& = \frac{k^2}{(4\pi)^2}\int_{\mathbb{R}^3}\int_{\mathbb{R}^3}\frac{e^{{\rm i}k|
x- y|-{\rm i}k| x- z|}}{| x- y|| x- z|}C_{jj}( y, z)d y d z,
\end{align*}
where $C_{jj}, j=1, 2, 3$ is the $(j,j)$-entry of the kernel
$C_{\boldsymbol{J}}$.  

Let $C_{jj}=C_{jj}^{\rm r}+{\rm i}C_{jj}^{\rm i}$ where $C_{jj}^{\rm r}$ and
$C_{jj}^{\rm i}$ are the real and imaginary parts of $C_{jj}$, respectively.
It follows from Assumption \ref{as:J} that the principal symbols of $C_{jj}^{\rm
r}$ and $C_{jj}^{\rm i}$ are $a_{jj}^{\rm r}|\xi|^{-2s}$ and $a_{jj}^{\rm
i}|\xi|^{-2s}$, respectively.

\begin{theorem}\label{tm:Cjj}
Let Assumption \ref{as:J} hold and $\mathcal{U}\subset\mathbb{R}^3$
be a bounded open set which has a positive
distance to $\mathcal{O}$. For $j=1,2,3$, the strength $a_{jj}^{\rm r}$ is
uniquely determined by
\[
\lim_{k\to\infty}k^{2s-2}\mathbb{E}|E_j(x)|^2=\frac1{(4\pi)^2}\int_{\mathbb{R}^3}\frac1{|x-y|^2}a_{jj}^{\rm r}(y)dy,\quad x\in\mathcal{U}
\]
and $a_{jj}^{\rm i}\equiv0$.
\end{theorem}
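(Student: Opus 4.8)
The plan is to reduce the statement to a scalar high‑frequency (stationary phase) asymptotic. Since the Helmholtz equation \eqref{eq:Helm} holds componentwise, Theorem \ref{tm:well-posed} gives $E_j(x)=\mathrm{i}k\int_{\mathbb R^3}\Phi_k(x,y)J_j(y)\,dy$, so $E_j$ is the field radiated by the \emph{scalar} random source $J_j$, whose covariance kernel $C_{jj}(y,z)=\mathbb E[J_j(y)\overline{J_j(z)}]$ is, by Assumption \ref{as:J}(ii), the Schwartz kernel of the $(j,j)$-entry of $Q_{\boldsymbol J}^C$, a scalar pseudo-differential operator with principal symbol $a_{jj}(y)|\xi|^{-2s}$. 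Moreover, by property (i) of the covariance together with the isotropy in Assumption \ref{as:J}, $C_{jj}(y,z)$ is real-valued and depends only on $|y-z|$; expressing the symbol through the kernel then forces $\overline{a_{jj}(y)}=a_{jj}(y)$, i.e. $a_{jj}^{\mathrm i}\equiv 0$ (this is also consistent with the fact that the measured quantity $\mathbb E|E_j(x)|^2$ is nonnegative and hence real). It therefore remains to establish the limit formula with $a_{jj}=a_{jj}^{\mathrm r}$ on the right-hand side and to argue that this limit determines $a_{jj}^{\mathrm r}$.

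For the asymptotics I would start from the displayed expression for $\mathbb E|E_j(x)|^2$ and insert the oscillatory representation $C_{jj}(y,z)=(2\pi)^{-3}\int_{\mathbb R^3}e^{\mathrm i(y-z)\cdot\xi}c_{jj}(y,\xi)\,d\xi$, where $c_{jj}(y,\xi)=a_{jj}(y)|\xi|^{-2s}+r_{jj}(y,\xi)$ with $r_{jj}$ a symbol of order strictly less than $-2s$, after splitting off the smooth low-frequency part of the kernel. Rescaling $\xi=k\eta$ turns the expression into an oscillatory integral over $(y,z,\eta)$ with phase $k\Psi$, $\Psi(y,z,\eta)=|x-y|-|x-z|+(y-z)\cdot\eta$, and amplitude essentially $a_{jj}(y)|\eta|^{-2s}\big(|x-y|\,|x-z|\big)^{-1}$. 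A stationary-phase expansion in the six variables $(z,\eta)$ then applies: the unique critical point is $z=y$, $\eta=(x-y)/|x-y|$, the critical value is $0$, and the Hessian in $(z,\eta)$ is congruent to $\begin{pmatrix}0&-I\\-I&0\end{pmatrix}$, so it has vanishing signature and determinant of modulus one; the amplitude is smooth near the critical set because $\mathrm{dist}(\mathcal U,\mathcal O)>0$ and $C_{jj}$ is supported in $\overline{\mathcal O}\times\overline{\mathcal O}$. Collecting the powers of $k$ together with the factor $(2\pi/k)^3$ from stationary phase, the leading term is exactly
\[
\mathbb E|E_j(x)|^2=\frac{k^{2-2s}}{(4\pi)^2}\int_{\mathbb R^3}\frac{a_{jj}(y)}{|x-y|^2}\,dy+o(k^{2-2s}),
\]
uniformly for $x\in\mathcal U$, the contributions of $r_{jj}$ and of the low-frequency part being $o(k^{2-2s})$; multiplying by $k^{2s-2}$ and letting $k\to\infty$ gives the stated limit. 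This is essentially the scalar estimate already available in \cite{LW,LHL}, the only new feature being that $a_{jj}$ is a priori complex-valued, which does not affect the computation.

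For the uniqueness of $a_{jj}^{\mathrm r}$, observe that $u(x):=\int_{\mathbb R^3}|x-y|^{-2}a_{jj}^{\mathrm r}(y)\,dy$ is, up to a positive constant, the Riesz potential of order one of $a_{jj}^{\mathrm r}$, so that $(-\Delta)^{1/2}u=c_0 a_{jj}^{\mathrm r}$ with $c_0>0$. If two admissible strengths produced the same limit on $\mathcal U$, their difference $g\in C_0^\infty(\mathcal O)$ would satisfy $I_1g=0$ on $\mathcal U$; since $I_1g$ is real-analytic on $\mathbb R^3\setminus\overline{\mathcal O}$ and decays at infinity, analytic continuation gives $I_1g=0$ on the connected component of $\mathbb R^3\setminus\overline{\mathcal O}$ containing $\mathcal U$, where in addition $(-\Delta)^{1/2}(I_1g)=c_0 g=0$; the weak unique continuation property for the fractional Laplacian then forces $I_1g\equiv 0$, hence $g\equiv 0$. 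Thus the limit determines $a_{jj}^{\mathrm r}$ uniquely. The main obstacle is the second paragraph: making the frequency rescaling and the stationary-phase expansion rigorous in the presence of the $|\xi|^{-2s}$ singularity at $\xi=0$ and of the lower-order and smoothing parts of the symbol, and showing their contributions are genuinely $o(k^{2-2s})$ uniformly in $x\in\mathcal U$; this is handled by splitting the $\xi$-integral into low- and high-frequency pieces and using the symbol bounds on $r_{jj}$, while the singularity of $\big(|x-y|\,|x-z|\big)^{-1}$ is harmless because $\mathcal U$ and $\mathcal O$ are separated. The uniqueness step is comparatively routine.
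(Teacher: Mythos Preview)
Your plan is essentially correct but follows a genuinely different route from the paper's own proof, and it is worth spelling out the contrast.

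\textbf{On $a_{jj}^{\rm i}\equiv 0$.} You deduce this a priori: Assumption \ref{as:J} says the covariance kernel depends only on $|y-z|$, so $C_{jj}(y,z)=C_{jj}(z,y)$, and the Hermitian property (i) gives $\overline{C_{jj}(z,y)}=C_{jj}(y,z)$; together these force $C_{jj}$ to be real and hence $a_{jj}^{\rm i}=0$. The paper instead keeps $C_{jj}^{\rm i}$ in play throughout the asymptotic computation, derives $\int |x-\zeta|^{-2}a_{jj}^{\rm i}(\zeta)\,d\zeta=0$ from the vanishing of the imaginary part \eqref{eq:E1-2}, and then invokes uniqueness for the Riesz potential. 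Your shortcut is cleaner and legitimate.

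\textbf{On the asymptotic formula.} You rescale $\xi=k\eta$ and apply stationary phase in the six variables $(z,\eta)$ with phase $|x-y|-|x-z|+(y-z)\cdot\eta$; the unique critical point is $z=y$, $\eta=(x-y)/|x-y|$, and your Hessian/signature claims are correct. The paper does \emph{not} use stationary phase. Instead it introduces an explicit diffeomorphism $\tau$ that converts the phase $|x-y|-|x-z|$ into the linear function $2g_1$, tracks the principal symbol of $C_{jj}$ through this change of variables using H\"ormander's symbol calculus (\cite[Theorem 18.2.9]{H07}), and then evaluates the inner $\xi$-integral by Fourier inversion, which localizes at $\xi=-2ke_1$. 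The paper's approach buys a fully rigorous argument with a built-in $O(k^{-2s-1})$ remainder, entirely within standard pseudo-differential calculus; yours is more conceptual and much shorter to write, but the technical hazards you flag are real: the $\eta$-domain is non-compact, the amplitude $|\eta|^{-2s}$ is singular at $0$, and for $s\le 3/2$ the rescaling is formal since $|\xi|^{-2s}$ is not locally integrable near the origin as a function. Your proposed low/high-frequency splitting together with integration by parts in $z$ for large $|\eta|$ will work, but carrying it out carefully is where the labor lies.

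\textbf{On uniqueness.} The paper simply cites \cite[Theorem 4.6]{LW}. Your argument via $(-\Delta)^{1/2}I_1=c_0\,\mathrm{id}$, analytic continuation on the exterior, and unique continuation for the fractional Laplacian is a valid alternative route, though it imports a nontrivial external result.
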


\begin{remark}
The diagonal entry $a_{jj}$ of the strength matrix $A$ is a
real-valued function and it can be uniquely determined by the high frequency
limit of the phaseless data $\mathbb{E}|E_j|^2$ on an open set $\mathcal U$,
$j=1, 2, 3$. 
\end{remark}

\begin{proof}
Rewriting $\mathbb{E}|E_j(x)|^2$ through $C_{jj}^{\rm r}$ and $C_{jj}^{\rm i}$, one get
\begin{align*}
\mathbb{E}|E_j(x)|^2 & =
\frac{k^2}{(4\pi)^2}\int_{\mathbb{R}^3}\int_{\mathbb{R}^3 }\frac{\cos(k| x-
y|-k| x- z|)C_{jj}^{\rm r}( y, z)-\sin(k| x- y|-k| x- z|)C_{jj}^{\rm i}( y,
z)}{| x- y|| x- z|}d y d z\\
&\quad +\frac{{\rm
i}k^2}{(4\pi)^2}\int_{\mathbb{R}^3}\int_{\mathbb{R}^3}\frac{\sin(k| x- y|-k| x-
z|)C_{jj}^{\rm r}( y, z)+\cos(k| x- y|-k| x- z|)C_{jj}^{\rm i}( y, z)}{| x- y||
x- z|}d y d z,
\end{align*}
which apparently leads to
\begin{align}\label{eq:E1-1}
\mathbb{E}|E_j(x)|^2&=\frac{k^2}{(4\pi)^2}\Re\left[\int_{\mathbb{R}^3}\int_{
\mathbb{R}^3}\frac{e^{{\rm i}k(| x- y|-| x- z|)}C_{jj}^{\rm r}( y, z)}{| x- y||
x- z|}d y d z\right]\nonumber\\
&\quad
-\frac{k^2}{(4\pi)^2}\Im\left[\int_{\mathbb{R}^3}\int_{\mathbb{R}^3}\frac{ e^ {
{ \rm i}k(| x- y|-| x- z|)}C_{jj}^{\rm i}( y, z)}{| x- y|| x- z|}d y d z\right]
\end{align}
and
\begin{align}\label{eq:E1-2}
\Im\left[\int_{\mathbb{R}^3}\int_{\mathbb{R}^3}\frac{e^{{\rm i}k(| x- y|-| x- z|)}C_{jj}^{\rm r}( y, z)}{| x- y|| x- z|}d y d z\right]+\Re\left[\int_{\mathbb{R}^3}\int_{\mathbb{R}^3}\frac{e^{{\rm i}k(| x- y|-| x- z|)}C_{jj}^{\rm i}( y, z)}{| x- y|| x- z|}d y d z\right]=0.
\end{align}
It then suffices to consider the integrals 
\[
{\rm I}_1( x):=\int_{\mathbb{R}^3}\int_{\mathbb{R}^3}\frac{e^{{\rm i}k(| x- y|-|
x- z|)}C_{jj}^{\rm r}( y, z)}{| x- y|| x- z|}d y d z
\]
and 
\[
{\rm I}_2( x):=\int_{\mathbb{R}^3}\int_{\mathbb{R}^3}\frac{e^{{\rm i}k(| x- y|-|
x- z|)}C_{jj}^{\rm i}( y, z)}{| x- y|| x- z|}d y d z.
\]

The proof consists of four steps. 

Step 1. For any $ x\in\mathcal{U}$, by introducing a smooth function
$\theta\in C_0^{\infty}(\mathbb{R}^3)$ such that $\theta|_{\mathcal{U}}\equiv1$ and
supp$(\theta)\subset\mathbb{R}^3\backslash\overline{\mathcal{O}}$, we get
\begin{align*}
{\rm I}_1(x)=\int_{\mathbb{R}^3}\int_{\mathbb{R}^3}\frac{e^{{\rm i}k(| x- y|-|
x- z|)}}{| x- y|| x- z|}C_{jj}^{\rm r}( y, z)\theta( x)d y d z.
\end{align*}
Denote 
\[
S_1( y, z, x):=C_{jj}^{\rm r}( y, z)\theta(
x)=\frac1{(2\pi)^3}\int_{\mathbb{R}^3}e^{{\rm i}( y- z)\cdot\xi}s_1( y,
x,\xi)d\xi. 
\]
It can be easily verified that  the symbol $s_1( y, x,\xi)=c_{jj}^{\rm
r}(y,\xi)\theta( x)$, where $c_{jj}^{\rm r}( y,\xi)$ is the symbol of the
kernel $C_{jj}^{\rm r}$ (cf. \cite{LW}). By Assumption \ref{as:J}, the leading
term of $s_1$, which is the principal symbol of $S_1$, has the form
\[
s_1^p( y,\xi)=a_{jj}^{\rm r}( y)\theta( x)|\xi|^{-2s}. 
\]

Following \cite{LW}, we define an invertible
transformation $\tau:{\mathbb{R}}^9\to{\mathbb{R}}^9$ given by $
\tau( y, z, x)=(g,h,x)$,
where $g=(g_1,g_2,g_3)$ and $h=(h_1,h_2,h_3)$ with
\begin{eqnarray*}
&& g_1=\frac12\left(| x- y|-| x- z|\right),\quad
h_1=\frac12\left(| x- y|+| x- z|\right),\\
&&g_2=\frac12\bigg[| x- y|\arccos\Big(\frac{y_3-x_3}{| x- y|}
\Big)-| x- z|\arccos\Big(\frac{z_3-x_3}{| x- z|}\Big)\bigg],\\
&&h_2=\frac12\bigg[| x- y|\arccos\Big(\frac{y_3-x_3}{| x- y|}
\Big)+| x- z|\arccos\Big(\frac{z_3-x_3}{| x- z|}\Big)\bigg],\\
&&g_3=\frac12\bigg[| x- y|\arctan\Big(\frac{y_2-x_2}{y_1-x_1}
\Big)-| x- z|\arctan\Big(\frac{z_2-x_2}{z_1-x_1}\Big)\bigg],\\
&&h_3=\frac12\bigg[| x- y|\arctan\Big(\frac{y_2-x_2}{y_1-x_1}
\Big)+| x- z|\arctan\Big(\frac{z_2-x_2}{z_1-x_1}\Big)\bigg].
\end{eqnarray*}
Then 
\[
{\rm I}_1(x)=\int_{{\mathbb{R}}^3}\int_{{\mathbb{R}}^3}
e^{2{\rm i}k(e_1\cdot g)}S_2(g,h,x)dgdh,
\]
where $e_1=(1,0,0)$ and
\begin{align}\label{eq:S2}
S_2(g,h,x)=& S_1(\tau^{-1}(g,h,x))\frac{\text{det}\left((\tau^{-1})'(g,h,
x)\right)}{((g+h)\cdot e_1)((h-g)\cdot e_1)}\nonumber\\
=&:S_1(\tau^{-1}(g,h,x))L^\tau(g,h,x).
\end{align}

Step 2. To get an explicit expression of $S_2$ with respect to $(g,h,x)$,
we define another invertible transformation
$\eta:{\mathbb{R}}^9\to{\mathbb{R}}^9$ given 
by $\eta(y,z,x)=(v,w,x)$ with $v=y-z$ and $w=y+z$. Let the kernel 
\begin{align}\label{eq:S3}
S_3(v,w,x):=&~S_1\circ\eta^{-1}(v,w,x)=S_1\Big(\frac{v+w}2,\frac{w-v}2,x\Big)\nonumber\\
=&~\frac1{(2\pi)^3}\int_{\mathbb{R}^3}e^{{\rm
i}v\cdot\xi}s_1\Big(\frac{v+w}2,x,\xi\Big)d\xi\nonumber\\
=&~\frac1{(2\pi)^3}\int_{\mathbb{R}^3}e^{{\rm
i}v\cdot\xi}s_3\left(w,x,\xi\right)d\xi,
\end{align}
where we have used the properties of symbols in the last step (cf. \cite[Lemma
18.2.1]{H07}). More precisely, the symbol 
$s_3$ is defined by 
\[
s_3(w,x,\xi)=e^{\langle-{\rm
i}D_v,D_\xi\rangle}s_1\left(\frac{v+w}2,x,\xi\right)\bigg|_{v=0},
\]
which has an asymptotic expansion
\[
s_3(w,x,\xi)\sim\sum_{n=1}^{\infty}\frac{\langle-{\rm i}D_v,D_\xi\rangle^n}{n!}s_1\left(\frac{v+w}2,x,\xi\right)\bigg|_{v=0}.
\]
Hence the principal symbol of $s_3$ is 
\[
s_3^p(w,x,\xi)=s_1^p\left(\frac{v+w}2,x,\xi\right)\bigg|_{v=0}=a_{jj}^{r}\left(\frac
w2\right)|\xi|^{-2s}\theta(x). 
\]

Next, define the diffeomorphism
$\gamma:=\eta\circ\tau^{-1}:(g,h,x)\mapsto(v,w,x).$ It preserves the plane
$\{(g,h,x)\in\mathbb{R}^9 : g=0\}$, i.e., $\gamma(0,h,x)=(0,w,x)$. Now we are
able to consider the kernel $S_1\circ\tau^{-1}$ in \eqref{eq:S2}:
\[
S_1\circ\tau^{-1}(g,h,x)=S_1\circ\eta^{-1}\circ\eta\circ\tau^{-1}(g,h,x)=S_3\circ\gamma(g,h,x),
\]
where the kernel $S_3\circ\gamma$ admits a symbol $\tilde s_3(h,x,\xi)$ under the diffeomorphism $\gamma$ satisfying
\[
S_3\circ\gamma(g,h,x)=\frac1{(2\pi)^3}\int_{\mathbb{R}^3}e^{{\rm i}g\cdot\xi}\tilde s_3(h,x,\xi)d\xi.
\]
Comparing the above kernel $S_3\circ\gamma(g,h,x)$ with $S_3(v,w,x)$
defined in \eqref{eq:S3}, we may check that their symbols have the following
relationship (cf. \cite[Theorem 18.2.9]{H07} or \cite{LPS08}):
\begin{align*}
\tilde s_3(h,x,\xi) &= s_3\left(w(0,h,x),x,\left(\frac{\partial v}{\partial
g}(0,h,x)\right)^{-\top}\xi\right)\left|\det\left(\frac{\partial v}{\partial
g}(0,h,x)\right)\right|^{-1}+r_1(h,x,\xi)\\
& = s_3^p\left(w(0,h,x),x,\left(\frac{\partial v}{\partial
g}(0,h,x)\right)^{-\top}\xi\right)\left|\det\left(\frac{\partial v}{\partial
g}(0,h,x)\right)\right|^{-1}+r_2(h,x,\xi)\\
& = a_{jj}^{\rm r}\left(\frac{w(0,h,x)}2\right)\left|\left(\frac{\partial
v}{\partial
g}(0,h,x)\right)^{-\top}\xi\right|^{-2s}\left|\det\left(\frac{\partial
v}{\partial g}(0,h,x)\right)\right|^{-1}\theta(x)+r_2(h,x,\xi)\\
&=: \tilde s_3^p(h,x,\xi)+r_2(h,x,\xi),
\end{align*}
where the residuals $r_1,r_2\in\mathcal{S}^{-2s-1}$. Here $\mathcal{S}^{m}$
denotes the space of symbols of order $m$ (cf. \cite{H07}).

We conclude from the above discussions that
\begin{align*}
S_2(g,h,x) & = S_1(\tau^{-1}(g,h,x))L^{\tau}(g,h,x)\\
& = \frac1{(2\pi)^3}\int_{\mathbb{R}^3}e^{{\rm i}g\cdot\xi}\tilde
s_3(h,x,\xi)L^{\tau}(g,h,x)d\xi\\
& = \frac1{(2\pi)^3}\int_{\mathbb{R}^3}e^{{\rm i}g\cdot\xi}s_2(h,x,\xi)d\xi,
\end{align*}
where in the last step we have used the same property as that used in
\eqref{eq:S3}. Here the symbol $s_2$ satisfies
\[
s_2(h,x,\xi)=s_2^p(h,x,\xi)+r_3(h,x,\xi),
\]
where the residual $r_3\in\mathcal{S}^{-2s-1}$ and the principal symbol
\[
s_2^p(h,x,\xi)=\tilde s_3^p(h,x,\xi)L^{\tau}(0,h,x). 
\]

Step 3. Based on the expression of $S_2$, the integral $I_1(x)$ has the form
\begin{align*}
{\rm I}_1(x) & = \int_{{\mathbb{R}}^3}\int_{{\mathbb{R}}^3}
e^{2{\rm i}k(e_1\cdot g)}S_2(g,h,x)dgdh\\
& = \int_{{\mathbb{R}}^3}\int_{{\mathbb{R}}^3}
e^{2{\rm i}k(e_1\cdot g)}\left[\frac1{(2\pi)^3}\int_{\mathbb{R}^3}e^{{\rm i}g\cdot\xi}[\tilde s_3^p(h,x,\xi)L^{\tau}(0,h,x)+r_3(h,x,\xi)]d\xi\right]dgdh\\
& = \int_{{\mathbb{R}}^3}\int_{{\mathbb{R}}^3}
\left[\frac1{(2\pi)^3}\int_{\mathbb{R}^3}e^{{\rm i}g\cdot(\xi+2ke_1)}dg\right][\tilde s_3^p(h,x,\xi)L^{\tau}(0,h,x)+r_3(h,x,\xi)]d\xi dh\\
& = \int_{\mathbb{R}^3}[\tilde
s_3^p(h,x,-2ke_1)L^{\tau}(0,h,x)+r_3(h,x,-2ke_1)]dh.
\end{align*}
It then suffices to calculate 
\[
\tilde s_3^p(h,x,-2ke_1)=a_{jj}^{\rm r}\left(\frac{w(0,h,x)}2\right)\left|\left(\frac{\partial v}{\partial g}(0,h,x)\right)^{-\top}(-2ke_1)\right|^{-2s}\left|\det\left(\frac{\partial v}{\partial g}(0,h,x)\right)\right|^{-1}\theta(x)
\]
and
\[
L^{\tau}(0,h,x)=\frac{\left|\det\left((\tau^{-1})'(0,h,x)\right)\right|}{(h\cdot e_1)^2}.
\]

Noting that
\begin{eqnarray*}
&&h_1+g_1=|x-y|,\quad h_1-g_1=|x-z|,\\
&&\frac{h_2+g_2}{h_1+g_1}=\arccos\Big(\frac{y_3-x_3}{|x-y|}\Big),\quad\frac{
h_2-g_2}{h_1-g_1}=\arccos\Big(\frac{z_3-x_3}{|x-z|}\Big),\\
&&\frac{h_3+g_3}{h_1+g_1}=\arctan\Big(\frac{y_2-x_2}{y_1-x_1}\Big),\quad\frac
{h_3-g_3}{h_1-g_1}=\arctan\Big(\frac{z_2-x_2}{z_1-x_1}\Big),
\end{eqnarray*}
we get
\begin{align*}
y_1&= x_1+(h_1+g_1)\sin\left(\frac{h_2+g_2}{h_1+g_1}\right)\cos\left(\frac{
h_3+g_3}{h_1+g_1}\right),\\
y_2&= x_2+(h_1+g_1)\sin\left(\frac{h_2+g_2}{h_1+g_1}\right)\sin\left(\frac{
h_3+g_3}{h_1+g_1}\right),\\
y_3&= x_3+(h_1+g_1)\cos\left(\frac{h_2+g_2}{h_1+g_1}\right),\\
z_1&= x_1+(h_1-g_1)\sin\left(\frac{h_2-g_2}{h_1-g_1}\right)\cos\left(\frac{
h_3-g_3}{h_1-g_1}\right),\\
z_2&= x_2+(h_1-g_1)\sin\left(\frac{h_2-g_2}{h_1-g_1}\right)\sin\left(\frac{
h_3-g_3}{h_1-g_1}\right),\\
z_3&= x_3+(h_1-g_1)\cos\left(\frac{h_2-g_2}{h_1-g_1}\right).\\
\end{align*}
A simple calculation yields that 
\[
\frac{\partial v}{\partial g}(0,h,x)=
2\left[
\begin{array}{ccc}
\sin\alpha\cos\beta-\alpha\cos\alpha\cos\beta+\beta\sin\alpha\sin\beta&\cos
\alpha\cos\beta&-\sin\alpha\sin\beta\\
\sin\alpha\sin\beta-\alpha\cos\alpha\sin\beta-\beta\sin\alpha\cos\beta&\cos
\alpha\sin\beta&\sin\alpha\cos\beta\\
\cos\alpha+\alpha\sin\alpha&-\sin\alpha&0
\end{array}
\right],
\]
where $\alpha:=\frac{h_2}{h_1}, \beta:=\frac{h_3}{h_1}$, and
\[
(\tau^{-1})'(0,h,x)=\left[
\begin{array}{ccc}
\frac12\frac{\partial v}{\partial g}&\frac12\frac{\partial v}{\partial
g}&I\\[4pt]
-\frac12\frac{\partial v}{\partial g}&\frac12\frac{\partial v}{\partial g}&I\\
0&0&I
\end{array}
\right].
\]
Here $I$ is the $3\times 3$ identity matrix. It then leads to 
\[
\det\left(\frac{\partial v}{\partial g}(0,h,x)\right)=8\sin\alpha,
\]
\[
\left(\frac{\partial v}{\partial g}(0,h,x)\right)^{-\top}e_1=\frac12\left[\begin{array}{c}
\sin\alpha\cos\beta\\
\sin\alpha\sin\beta\\
\cos\alpha
\end{array}\right]
\]
and thus
\[
\tilde s_3^p(h,x,-2ke_1)=a_{jj}^{\rm r}\left(\frac{w(0,h,x)}2\right)\frac{k^{-2s}}{8|\sin\alpha|}\theta(x).
\]

To get $L^{\tau}(0,h,x)$, we next consider the matrix 
\[
(\tau^{-1})'(0,h,x)=\frac{\partial(y,z,x)}{\partial(g,h,x)}\bigg|_{g=0}=\left[
\begin{array}{ccc}
\frac12\frac{\partial v}{\partial g}&\frac12\frac{\partial v}{\partial
g}&I\\[4pt]
-\frac12\frac{\partial v}{\partial g}&\frac12\frac{\partial v}{\partial g}&I\\
0&0&I
\end{array}
\right],
\]
which gives $\det\left((\tau^{-1})'(0,h,x)\right)=8\sin^2\alpha$ and
\[
L^\tau(0,h,x)=\frac{8\sin^2\alpha}{(h\cdot e_1)^2}.
\]

Step 4. Based on the a priori estimates above, we obtain 
\begin{align*}
{\rm I}_1(x)=\int_{\mathbb{R}^3}\left[a_{jj}^{\rm
r}\left(\frac{w(0,h,x)}2\right)\frac{|\sin\alpha|}{(h\cdot
e_1)^2}k^{-2s}\theta(x)+r_3(h,x,-2ke_1)\right]dh,
\end{align*}
where $\frac{w(0,h,x)}2=(h_1\sin\alpha\cos\beta,h_1\sin\alpha\sin\beta,h_1\cos\alpha)+x$. 

Define another coordinate transform $\rho:\mathbb{R}^3\to\mathbb{R}^3$ by
\[
\rho(h)=\zeta:=(h_1\sin\alpha\cos\beta,h_1\sin\alpha\sin\beta,h_1\cos\alpha)+x.
\]
Noting that $|\zeta-x|=h_1=h\cdot e_1$ and
$\det((\rho^{-1})')=\frac1{\det(\rho')}$ with 
\[
\rho'=\left[
\begin{array}{ccc}
\sin\alpha\cos\beta-\alpha\cos\alpha\cos\beta+\beta\sin\alpha\sin\beta&\cos
\alpha\cos\beta&-\sin\alpha\sin\beta\\
\sin\alpha\sin\beta-\alpha\cos\alpha\sin\beta-\beta\sin\alpha\cos\beta&\cos
\alpha\sin\beta&\sin\alpha\cos\beta\\
\cos\alpha+\alpha\sin\alpha&-\sin\alpha&0
\end{array}
\right],
\] 
we have 
\[
{\rm I}_1(x)=\left[\int_{\mathbb{R}^3}\frac1{|\zeta-x|^2}a_{jj}^{\rm
r}(\zeta)d\zeta\right] k^{-2s}+O(k^{-2s-1}),\quad x\in\mathcal{U}.
\]

Following the same procedure as above, we may show that 
\begin{align*}
{\rm I}_2(x)=&\left[\int_{{\mathbb{R}}^3}\frac1{|\zeta-x|^2}a_{jj}^{\rm
i}(\zeta)d\zeta\right]k^{-2s}+O(k^{-2s-1}),\quad x\in\mathcal{U}.
\end{align*}
It follows from \eqref{eq:E1-1}--\eqref{eq:E1-2} that 
\[
\lim_{k\to\infty}k^{2s-2}\mathbb{E}|E_j(x)|^2=\frac1{(4\pi)^2}\int_{\mathbb{R}^3}\frac1{|\zeta-x|^2}a_{jj}^{\rm r}(\zeta)d\zeta
\]
and
\[
\int_{{\mathbb{R}}^3}\frac1{|\zeta-x|^2}a_{jj}^{\rm i}(\zeta)d\zeta=0,
\]
which imply that $a_{jj}^{\rm r}$ and $a_{jj}^{\rm i}$ can be uniquely
determined (cf. \cite[Theorem 4.6]{LW}) and in particular $a_{jj}^{\rm i}=0$.  
\end{proof}

\subsection{Covariance between different components of $\boldsymbol{J}$}

To recover the non-diagonal entries of the strength matrix $A(x)$, we now
consider the covariance between different components of $\boldsymbol{J}$. By
Theorem \ref{tm:well-posed}, we have 
\begin{align*}
\mathbb{E}[E_j(x)\overline{E_l(x)}]=&~k^2\int_{\mathbb{R}^3}\int_{
\mathbb{R}^3}\Phi_k( x, y)\overline{\Phi_k( x, z)}\mathbb{E}[J_j(
y)\overline{J_l( z)}]d y d z\\
=&~\frac{k^2}{(4\pi)^2}\int_{\mathbb{R}^3}\int_{\mathbb{R}^3}\frac{e^{{\rm i}k| x- y|-{\rm i}k| x- z|}}{| x- y|| x- z|}C_{jl}( y, z)d y d z
\end{align*}
for $j\neq l$ and $j,l=1,2,3$.
Denote by $C_{jl}^{\rm r}$ and $C_{jl}^{\rm i}$ the real and imaginary parts of
$C_{jl}$, respectively. The recovery of strengths $a_{jl}^{\rm r}$ and
$a_{jl}^{\rm i}$ of $C_{jl}^{\rm r}$ and $C_{jl}^{\rm i}$ are stated in the
following theorem. 

\begin{theorem}\label{tm:Cjl}
Let Assumption \ref{as:J} hold and $\mathcal{U}\subset\mathbb{R}^3$ be a bounded open set which has a positive
distance to $\mathcal{O}$. For $j,l=1,2,3$ and $j\neq l$, the strengths
$a_{jl}^{\rm r}$ and $a_{jl}^{\rm i}$ are uniquely determined by
\[
\lim_{k\to\infty}k^{2s-2}\Re\mathbb{E}[E_j(x)\overline{E_l(x)}]=\frac1{(4\pi)^2}\int_{\mathbb{R}^3}\frac1{|x-y|^2}a_{jl}^{\rm r}(y)dy,\quad x\in\mathcal{U}
\]
and 
\[
\lim_{k\to\infty}k^{2s-2}\Im\mathbb{E}[E_j(x)\overline{E_l(x)}]=\frac1{(4\pi)^2}\int_{\mathbb{R}^3}\frac1{|x-y|^2}a_{jl}^{\rm i}(y)dy,\quad x\in\mathcal{U}.
\]
\end{theorem}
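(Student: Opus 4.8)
The plan is to mirror the four-step argument used in the proof of Theorem \ref{tm:Cjj}, but now applied to the off-diagonal kernels $C_{jl}^{\rm r}$ and $C_{jl}^{\rm i}$ instead of the diagonal ones. First I would split $\mathbb{E}[E_j(x)\overline{E_l(x)}]$ into its real and imaginary parts. Writing $C_{jl}=C_{jl}^{\rm r}+{\rm i}C_{jl}^{\rm i}$ and expanding $e^{{\rm i}k(|x-y|-|x-z|)}=\cos(k(|x-y|-|x-z|))+{\rm i}\sin(k(|x-y|-|x-z|))$, one obtains, as in \eqref{eq:E1-1}--\eqref{eq:E1-2},
\begin{align*}
\Re\mathbb{E}[E_j(x)\overline{E_l(x)}]&=\frac{k^2}{(4\pi)^2}\Re\left[\int_{\mathbb{R}^3}\int_{\mathbb{R}^3}\frac{e^{{\rm i}k(|x-y|-|x-z|)}C_{jl}^{\rm r}(y,z)}{|x-y||x-z|}dydz\right]\\
&\quad-\frac{k^2}{(4\pi)^2}\Im\left[\int_{\mathbb{R}^3}\int_{\mathbb{R}^3}\frac{e^{{\rm i}k(|x-y|-|x-z|)}C_{jl}^{\rm i}(y,z)}{|x-y||x-z|}dydz\right],
\end{align*}
together with the analogous identity for the imaginary part. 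So it again suffices to study the two scalar oscillatory double integrals ${\rm I}_1(x)$ and ${\rm I}_2(x)$ built from $C_{jl}^{\rm r}$ and $C_{jl}^{\rm i}$ respectively.

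The second step is the asymptotic analysis of these integrals, which is identical in structure to Steps 1--4 of Theorem \ref{tm:Cjj}. I would insert the cutoff $\theta\in C_0^\infty(\mathbb{R}^3)$ with $\theta|_{\mathcal U}\equiv 1$ and $\mathrm{supp}(\theta)\subset\mathbb{R}^3\setminus\overline{\mathcal O}$, introduce the same changes of variables $\tau$, $\eta$, $\rho$, and track the principal symbol through the symbol calculus. The only ingredient specific to the off-diagonal case is that Assumption \ref{as:J} guarantees $Q_{\boldsymbol J}^C$ is a matrix-valued pseudo-differential operator whose principal symbol is $A(x)|\xi|^{-2s}$; reading off the $(j,l)$-entry, the kernels $C_{jl}^{\rm r}$ and $C_{jl}^{\rm i}$ have principal symbols $a_{jl}^{\rm r}(y)|\xi|^{-2s}$ and $a_{jl}^{\rm i}(y)|\xi|^{-2s}$. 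Feeding these through exactly the computation that produced ${\rm I}_1(x)=[\int_{\mathbb{R}^3}|\zeta-x|^{-2}a_{jj}^{\rm r}(\zeta)d\zeta]k^{-2s}+O(k^{-2s-1})$ yields
\begin{align*}
{\rm I}_1(x)&=\left[\int_{\mathbb{R}^3}\frac1{|\zeta-x|^2}a_{jl}^{\rm r}(\zeta)d\zeta\right]k^{-2s}+O(k^{-2s-1}),\\
{\rm I}_2(x)&=\left[\int_{\mathbb{R}^3}\frac1{|\zeta-x|^2}a_{jl}^{\rm i}(\zeta)d\zeta\right]k^{-2s}+O(k^{-2s-1}),
\end{align*}
for $x\in\mathcal U$.

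The third step combines these with the real/imaginary decomposition. Multiplying by $k^{2s-2}/(4\pi)^2$ and letting $k\to\infty$, the $\Im$-type terms in $\Re\mathbb{E}[E_j\overline{E_l}]$ carry a factor $\sin(\cdots)$ that, after the stationary-phase reduction, contributes only at order $k^{-2s-1}$ relative to the leading $\cos$-type term — more precisely, the leading term of ${\rm I}_1$ is real and the leading term of ${\rm I}_2$ is real, so $\Im[{\rm I}_1]$ and $\Im[{\rm I}_2]$ are $O(k^{-2s-1})$. This is exactly the mechanism by which \eqref{eq:E1-1}--\eqref{eq:E1-2} collapsed in the diagonal case. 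Hence $\lim_{k\to\infty}k^{2s-2}\Re\mathbb{E}[E_j(x)\overline{E_l(x)}]=\frac1{(4\pi)^2}\int_{\mathbb{R}^3}|\zeta-x|^{-2}a_{jl}^{\rm r}(\zeta)d\zeta$ and similarly for the imaginary part with $a_{jl}^{\rm i}$. The final step is uniqueness: the map $f\mapsto\int_{\mathbb{R}^3}|\zeta-x|^{-2}f(\zeta)d\zeta$, restricted to $x$ in the open set $\mathcal U$ disjoint from $\mathrm{supp}(f)\subset\mathcal O$, is injective on $C_0^\infty$ (this is \cite[Theorem 4.6]{LW}, a Newtonian-type potential argument via analyticity and unique continuation), so $a_{jl}^{\rm r}$ and $a_{jl}^{\rm i}$ are uniquely determined.

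The main obstacle — to the extent there is one — is bookkeeping rather than a new idea: one must verify that the matrix-valued pseudo-differential calculus behaves entry-wise, i.e., that the $(j,l)$-entry of the Schwartz kernel of $Q_{\boldsymbol J}^C$ is itself a scalar pseudo-differential operator with principal symbol $a_{jl}(x)|\xi|^{-2s}$, and that the residual estimates $r_i\in\mathcal{S}^{-2s-1}$ from Steps 2--3 of Theorem \ref{tm:Cjj} survive verbatim for complex-valued, non-self-adjoint kernels. Since the change-of-variables machinery ($\tau$, $\eta$, $\gamma$, $\rho$) and the symbol-flattening step \cite[Theorem 18.2.9]{H07} are purely linear in the kernel, no self-adjointness was ever used; the argument therefore transfers without modification, and the bulk of the proof can be stated simply as ``following the same procedure as in the proof of Theorem \ref{tm:Cjj}.''
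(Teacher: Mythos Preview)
Your proposal is correct and matches the paper's own proof essentially line for line: the paper also decomposes $\mathbb{E}[E_j(x)\overline{E_l(x)}]$ into $\frac{k^2}{(4\pi)^2}(\Re[{\rm I}_3]-\Im[{\rm I}_4])+\frac{{\rm i}k^2}{(4\pi)^2}(\Im[{\rm I}_3]+\Re[{\rm I}_4])$, invokes ``the same procedure as in the proof of Theorem~\ref{tm:Cjj}'' to obtain the asymptotics of ${\rm I}_3$ and ${\rm I}_4$, and reads off the two limits from the fact that the leading terms are real. Your closing remark that the symbol calculus is linear in the kernel and never uses self-adjointness is exactly the justification the paper leaves implicit.
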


\begin{remark}
The non-diagonal entry $a_{jl}$ of the strength matrix $A$ is a complex-valued
function and it can be uniquely determined by the high frequency limit
of the phased data $\mathbb{E}[E_j\overline{E_l}]$ on an open set $\mathcal U$ with
$j, l=1, 2, 3$ and $j\neq l$. 
\end{remark}

\begin{proof}
Using $C_{jl}^{\rm r}$ and $C_{jl}^{\rm i}$, we may split
$\mathbb{E}[E_j(x)\overline{E_l(x)}]$ into the real and imaginary parts
\begin{align*}
&\mathbb{E}[E_j(x)\overline{E_l(x)}]\\
& = \frac{k^2}{(4\pi)^2}\int_{\mathbb{R}^3}\int_{\mathbb{R}^3}\frac{\cos(k| x-
y|-k| x- z|)C_{jl}^{\rm r}( y, z)-\sin(k| x- y|-k| x- z|)C_{jl}^{\rm i}( y,
z)}{| x- y|| x- z|}d y d z\\
&\quad +\frac{{\rm
i}k^2}{(4\pi)^2}\int_{\mathbb{R}^3}\int_{\mathbb{R}^3}\frac{\sin(k| x- y|-k| x-
z|)C_{jl}^{\rm r}( y, z)+\cos(k| x- y|-k| x- z|)C_{jl}^{\rm i}( y, z)}{| x- y||
x- z|}d y d z\\
&=\frac{k^2}{(4\pi)^2}\left(\Re[{\rm I}_3(x)]-\Im[{\rm
I}_4(x)]\right)+\frac{{\rm
i}k^2}{(4\pi)^2}\left(\Im[{\rm I}_3(x)]+\Re[{\rm I}_4(x)]\right),
\end{align*}
where
\[
{\rm I}_3( x):=\int_{\mathbb{R}^3}\int_{\mathbb{R}^3}\frac{e^{{\rm i}k(| x- y|-|
x- z|)}C_{jl}^{\rm r}( y, z)}{| x- y|| x- z|}d y d z
\]
and 
\[
{\rm I}_4( x):=\int_{\mathbb{R}^3}\int_{\mathbb{R}^3}\frac{e^{{\rm i}k(| x- y|-|
x- z|)}C_{jl}^{\rm i}( y, z)}{| x- y|| x- z|}d y d z.
\]

Following the same procedure as that in the proof of Theorem \ref{tm:Cjj}, we
may show for any $x\in\mathcal{U}$ that 
\begin{align*}
{\rm I}_3(x)=\left[\int_{{\mathbb{R}}^3}\frac1{|\zeta-x|^2}a_{jl}^{\rm
r}(\zeta)\theta(x)d\zeta\right]k^{-2s}+O(k^{-2s-1})
\end{align*}
and
\begin{align*}
{\rm I}_4(x)=\left[\int_{{\mathbb{R}}^3}\frac1{|\zeta-x|^2}a_{jl}^{\rm
i}(\zeta)\theta(x)d\zeta\right]k^{-2s}+O(k^{-2s-1}).
\end{align*}
Consequently, we have for any $x\in\mathcal{U}$ that 
\[
\lim_{k\to\infty}k^{2s}\Im[{\rm
I}_3(x)]=\lim_{k\to\infty}k^{2s}\Im[{\rm I}_4(x)]=0
\]
and 
\begin{align*}
\lim_{k\to\infty}k^{2s}\Re[{\rm
I}_3(x)] &=\int_{{\mathbb{R}}^3}\frac1{|\zeta-x|^2}a_ { jl }^{\rm
r}(\zeta)\theta(x)d\zeta, \\
\lim_{k\to\infty}k^{2s}\Re[{\rm
I}_4(x)] &=\int_{{\mathbb{R}}^3}\frac1{|\zeta-x|^2} a_ { jl}^{\rm
i}(\zeta)\theta(x)d\zeta,
\end{align*}
which completes the proof.
\end{proof}

\begin{remark}
The above results can be combined into  
\begin{align}\label{eq:Ecompo}
\lim_{k\to\infty}k^{2s-2}\mathbb{E}[E_j(x)\overline{E_l(x)}]=\frac1{
(4\pi)^2}\int_{\mathbb{R}^3}\frac1{|x-y|^2}a_{jl}(y)dy,\quad
j,l=1,2,3,~x\in\mathcal{U}.
\end{align}
Equivalently, we have the matrix form 
\begin{align}\label{eq:Ematrix}
\lim_{k\to\infty}k^{2s-2}\mathbb{E}\left[\boldsymbol{E}(x)\boldsymbol{
E}^*(x)\right]=\frac1{(4\pi)^2}\int_{\mathbb{R}^3}\frac1{|x-y|^2}A(y)dy,
\quad x\in\mathcal{U}, 
\end{align}
which shows that the micro-correlation strength matrix function $A(x)$ can be 
uniquely determined by the high frequency limit of the data
$\mathbb{E}\left[\boldsymbol{E}\boldsymbol{E}^*\right]$ on
an open set $\mathcal{U}$. 
\end{remark}

\begin{remark}
If the covariance operators between components $J_j$ and $J_l$ are
pseudo-differential operators of the same order with the principal symbols
$a_{jl}(x)|\xi|^{-2s}$, then all the strength $\{a_{jl}\}_{j,l=1,2,3}$ can be
recovered at the same time by \eqref{eq:Ematrix}.

However, if the covariance operators between $J_j$ and $J_l$ are of different
orders with the principal symbols $a_{jl}(x)|\xi|^{-2s_{jl}}$ where $s_{jl}\in
[0, \frac{5}{2})$, then only the strength of the roughest term can be recovered
by \eqref{eq:Ematrix}. For example, if $s_{11}<s_{jl}$ for any $(j,l)\neq(1,1)$
and $j,l=1,2,3$, then the principal symbol of the covariance operator of
$\boldsymbol{J}$ is $A(x)|\xi|^{-2s_{11}}$ with
$A(x)=\text{diag}\{a_{11}(x),0,0\}$. In this case, the other strength
$a_{jl}(x)$ can be recovered by modifying \eqref{eq:Ecompo} as follows: 
\[
\lim_{k\to\infty}k^{2s_{jl}-2}\mathbb{E}[E_j(x)\overline{E_l(x)}]
=\frac1{(4\pi)^2}\int_{\mathbb{R}^3}\frac1{|x-y|^2}a_{jl}(y)dy,\quad
j,l=1,2,3,~x\in\mathcal{U}.
\]
\end{remark}

By Theorems \ref{tm:Cjj} and \ref{tm:Cjl}, we conclude that the strength matrix
$A(x)$ of the covariance operator $Q_{\boldsymbol{J}}^C$ can be uniquely
determined by the high frequency limit of the expectation of the electric field
$\boldsymbol{E}$ measured on an open set $\mathcal U$. Moreover, if only the
energy of the electric field $|E_j(x)|^2$, $j=1,2,3$, can be observed on an open
bounded domain $\mathcal{U}$, then the strength of $J_j$ can
be uniquely determined by a single realization of the phaseless data almost
surely, which is discussed in the following section.

\section{Recovery by a single path}

In this section, we present some ergodicity results to avoid using all the
sample paths in the recovery of the strength. We show that the diagonal entries
of the micro-correlation strength matrix can be uniquely determined
almost surely by the amplitude of the electric field averaged over the frequency
band at a single path. 

To indicate the dependence on the wavenumber $k$ of the electric field, we use
the notation $E_j(x;k)$ from now on. The following theorem is the main result
of this section. 

\begin{theorem}\label{tm:ergo}
Let Assumption \ref{as:J} hold and $\mathcal{U}\subset\mathbb{R}^3$
be a bounded open set which has a positive
distance to $\mathcal{O}$. The strength $a_{jj}$ is uniquely determined almost
surely by
\[
\lim_{K\to\infty}\frac1{K-1}\int_1^Kk^{2s-2}|E_j(x;k)|^2dk=\frac1{(4\pi)^2}\int_
{\mathbb{R}^3}\frac1{|x-y|^2}a_{jj}(y)dy,\quad x\in\mathcal{U}.
\]
\end{theorem}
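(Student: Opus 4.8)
The plan is to follow the ``ergodicity in frequency'' scheme used for the acoustic and elastic inverse random source problems. Fix $x \in \mathcal{U}$ and set $F(k) := k^{2s-2}|E_j(x;k)|^2$. By Theorem \ref{tm:well-posed} each $E_j(x;k) = {\rm i}k\int_{\mathbb{R}^3}\Phi_k(x,y)J_j(y)\,dy$ is a centered complex Gaussian variable, and by Theorem \ref{tm:Cjj} (and its proof) $\mathbb{E}F(k)$ is bounded on $[1,\infty)$ and converges to $\mu := \frac{1}{(4\pi)^2}\int_{\mathbb{R}^3}\frac{1}{|x-y|^2}a_{jj}(y)\,dy$ as $k\to\infty$; consequently $\mu_K := \frac{1}{K-1}\int_1^K \mathbb{E}F(k)\,dk \to \mu$. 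It therefore suffices to prove that $\frac{1}{K-1}\int_1^K F(k)\,dk - \mu_K \to 0$ almost surely, since then the injectivity of $a \mapsto \int_{\mathbb{R}^3}\frac{a(y)}{|x-y|^2}\,dy$ over $\mathcal{U}$ (cf.\ \cite[Theorem 4.6]{LW}, as already invoked in Theorem \ref{tm:Cjj}) yields the unique determination of $a_{jj}$. The null set a priori depends on $x$, but Fubini then gives the claimed identity for a.e.\ $x\in\mathcal{U}$, which is enough for uniqueness.

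The heart of the argument is a decay estimate for the two-frequency correlation
$$
\Theta(k_1,k_2) := \mathbb{E}\big[E_j(x;k_1)\overline{E_j(x;k_2)}\big] = \frac{k_1k_2}{(4\pi)^2}\int_{\mathbb{R}^3}\int_{\mathbb{R}^3}\frac{e^{{\rm i}(k_1|x-y|-k_2|x-z|)}}{|x-y|\,|x-z|}C_{jj}(y,z)\,dy\,dz .
$$
Running the change of variables $\tau$ and the symbol calculus from the proof of Theorem \ref{tm:Cjj}, the phase becomes ${\rm i}(k_1+k_2)g_1 + {\rm i}(k_1-k_2)h_1$; the $g$-integration extracts the symbol at $\xi = -(k_1+k_2)e_1$, contributing a factor $(k_1+k_2)^{-2s}$ from the principal part plus a remainder of order $(k_1+k_2)^{-2s-1}$, and the remaining integral carries the oscillatory factor $e^{{\rm i}(k_1-k_2)h_1}$ against an amplitude that is compactly supported in $h$ (through $a_{jj}$) and smooth away from the integrable set $\{\sin\alpha = 0\}$. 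One integration by parts in $h_1$ then gives, for $k_1,k_2\ge 1$,
$$
|\Theta(k_1,k_2)| \lesssim \frac{k_1k_2}{(k_1+k_2)^{2s}(1+|k_1-k_2|)} \lesssim \frac{(k_1k_2)^{1-s}}{1+|k_1-k_2|} .
$$
Since the real and imaginary parts of $J_j$ are i.i.d.\ with $Q_{\boldsymbol{J}}^R = 0$ (Assumption \ref{as:J}(i)), we have $\mathbb{E}[E_j(x;k_1)E_j(x;k_2)] = 0$, so $(E_j(x;k_1),E_j(x;k_2))$ is a circularly symmetric complex Gaussian pair and the Isserlis--Wick formula gives
$$
\mathrm{Cov}\big(|E_j(x;k_1)|^2,|E_j(x;k_2)|^2\big) = |\Theta(k_1,k_2)|^2 \lesssim \frac{(k_1k_2)^{2-2s}}{(1+|k_1-k_2|)^2} .
$$

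Armed with this, a direct computation gives $\mathbb{E}\big[\big(\frac{1}{K-1}\int_1^K F(k)\,dk - \mu_K\big)^2\big] = \frac{1}{(K-1)^2}\int_1^K\int_1^K (k_1k_2)^{2s-2}\,\mathrm{Cov}(|E_j(x;k_1)|^2,|E_j(x;k_2)|^2)\,dk_1\,dk_2 \lesssim \frac{1}{(K-1)^2}\int_1^K\int_1^K\frac{dk_1\,dk_2}{(1+|k_1-k_2|)^2} \lesssim \frac{1}{K}$. Taking $K_n = n^2$, Chebyshev's inequality and the Borel--Cantelli lemma yield $\frac{1}{K_n-1}\int_1^{K_n}F(k)\,dk \to \mu$ almost surely. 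For $K_n \le K \le K_{n+1}$ one writes $\frac{1}{K-1}\int_1^K F\,dk - \frac{1}{K_n-1}\int_1^{K_n} F\,dk = \big(\frac{1}{K-1}-\frac{1}{K_n-1}\big)\int_1^{K_n}F\,dk + \frac{1}{K-1}\int_{K_n}^{K}F\,dk$: the first term is $O\big(\frac{K_{n+1}-K_n}{K_n}\big) = O(n^{-1})$ times a factor converging a.s.\ to $\mu$, hence $\to 0$ a.s.; for the second (nonnegative) term, the same second-moment bound gives $\mathbb{E}\big[\big(\frac{1}{K_n-1}\int_{K_n}^{K_{n+1}}F\,dk\big)^2\big] \lesssim \frac{(K_{n+1}-K_n)^2}{K_n^2} \lesssim n^{-2}$, so it too tends to $0$ a.s.\ by Chebyshev--Borel--Cantelli. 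Combining, $\frac{1}{K-1}\int_1^K F(k)\,dk \to \mu$ almost surely, which completes the proof.

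The main obstacle is the decay estimate for $\Theta(k_1,k_2)$: one must transport the \emph{non-diagonal} phase $e^{{\rm i}(k_1-k_2)h_1}$ through the full sequence of changes of variables ($\tau,\eta,\gamma,\rho$) and symbol manipulations of Theorem \ref{tm:Cjj}, verify that after the $g$-integration the surviving amplitude is regular enough (away from the integrable singularity at $\sin\alpha=0$) to allow integration by parts in $h_1$, and check that the resulting $1/(1+|k_1-k_2|)$ decay combines with the $(k_1+k_2)^{-2s}$ factor to control the normalized covariance; the remaining steps are routine applications of Gaussian moment identities, Chebyshev's inequality, and the Borel--Cantelli lemma.
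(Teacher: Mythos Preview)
Your proof is correct and follows essentially the same architecture as the paper's: reduce to showing the centered frequency average tends to zero almost surely, estimate the two-frequency correlation $\Theta(k_1,k_2)=\mathbb{E}[E_j(x;k_1)\overline{E_j(x;k_2)}]$ by transporting the symbol calculus of Theorem~\ref{tm:Cjj} (with the phase now $(k_1+k_2)g_1+(k_1-k_2)h_1$) and integrating by parts in $h_1$ to gain $(1+|k_1-k_2|)^{-1}$, use $Q_{\boldsymbol J}^R=0$ to kill the pseudo-covariance, and then feed the resulting covariance decay into an ergodicity argument.

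The only genuine methodological difference is in that last step. The paper packages the conclusion by invoking an abstract criterion (their Proposition~4.2, taken from \cite{CL04,LPS08,LHL}): once $|\mathbb{E}[Y_j(k_1)Y_j(k_2)]|\lesssim (1+|k_1-k_2|)^{-\beta}$ for some $\beta\in(0,1)$, the Ces\`aro average vanishes a.s.\ without further work. You instead prove this by hand: bound the variance of the average by $O(K^{-1})$, apply Chebyshev and Borel--Cantelli along $K_n=n^2$, and interpolate between consecutive $K_n$ using positivity of $F$ and a second moment bound on the short increments. Your route is more elementary and self-contained, and in fact exploits slightly more decay (you use the full $(1+|k_1-k_2|)^{-2}$ in the squared covariance, whereas the paper deliberately weakens to an exponent $\beta<1$ to fit the hypothesis of their black-box proposition). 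The paper's route is shorter once the cited proposition is granted. Both rest on exactly the same analytic estimate for $\Theta(k_1,k_2)$, and the Gaussian moment step is the same computation in different clothing: the paper decomposes into real and imaginary parts and applies a real Isserlis identity, while you use circular symmetry and the complex Wick formula directly to get $\mathrm{Cov}(|E_j(k_1)|^2,|E_j(k_2)|^2)=|\Theta(k_1,k_2)|^2$.
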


The above theorem indicates that it is statistically stable to recover the
diagonal entries of the micro-correlation strength matrix since only a single 
realization is needed for the random source. We present some preliminaries on
ergodicity before showing the proof of Theorem \ref{tm:ergo}.

\subsection{Ergodic relation}

For $j=1, 2, 3$, define
\[
T_j(x):=\frac1{(4\pi)^2}\int_{\mathbb{R}^3}\frac1{|x-y|^2}a_{jj}(y)dy. 
\]
According to Theorem \ref{tm:Cjj}, it holds $a_{jj}=a_{jj}^{\rm r}+{\rm
i}a_{jj}^{\rm i}=a_{jj}^{\rm r}$ and 
\begin{align}\label{eq:limit}
\lim_{k\to\infty}k^{2s-2}\mathbb{E}|E_j(x;k)|^2=T_j(x),\quad x\in\mathcal{U},
\end{align}
which implies
\begin{align}\label{eq:limit-2}
\lim_{K\to\infty}\frac1{K-1}\int_1^Kk^{2s-2}\mathbb{E}|E_j(x;k)|^2dk=T_j(x).
\end{align}
In fact, for any $\epsilon>0$, it follows from \eqref{eq:limit} that there
exists some $k^*=k^*(\epsilon)>0$ such that 
\[
\left|k^{2s-2}\mathbb{E}|E_j(x;k)|^2-T_j(x)\right|<\frac{\epsilon}
2\quad\forall~k>k^*. 
\]
On the other hand, there exists $K^*=K^*(\epsilon)>0$ such that for any
$K>K^*$
\begin{align*}
&\left|\frac1{K-1}\int_1^K\left(k^{2s-2}\mathbb{E}|E_j(x;k)|^2-T_j(x)\right)dk\right|\\
& \le
\frac1{K-1}\int_1^{k^*}\left|k^{2s-2}\mathbb{E}|E_j(x;k)|^2-T_j(x)\right|dk
+\frac1{K-1}\int_{k^*}^K\left|k^{2s-2}\mathbb{E}|E_j(x;k)|^2-T_j(x)\right|dk\\
& \le \frac{C}{K-1}+\frac{K-k^*}{K-1}\frac{\epsilon}2<\epsilon
\end{align*}
for some constant $C>0$, and hence \eqref{eq:limit-2} holds.

To prove the result given in Theorem \ref{tm:ergo},
due to \eqref{eq:limit-2}, it then suffices to show 
\begin{align}\label{eq:error}
\lim_{K\to\infty}\frac1{K-1}\int_1^Kk^{2s-2}\left(|E_j(x;k)|^2-\mathbb{E}|E_j(x;k)|^2\right)dk=0.
\end{align}
The following propositions are required in order to get the ergodic relation
\eqref{eq:error}. The proofs can be found in \cite{CL04, LPS08, LHL}. 

\begin{proposition}\label{prop:CL}
Let $Y(t)$ be a centered random field with $\mathbb{E}[Y(t)] = 0$. If the
covariance function $R(\cdot,\cdot)$ is continuous and satisfies
\[
|R(t,u)|=\left|\mathbb{E}[Y(t)Y(u)]\right|\lesssim\frac{t^\alpha+u^\alpha}{
1+|t-u|^{\beta}},
\]
where the constants $\alpha, \beta$ satisfy $0\le2\alpha<\beta<1$, then
\[
\lim_{K\to\infty}\frac1{K-1}\int_1^KY(k)dk=0
\]
holds almost surely. 
\end{proposition}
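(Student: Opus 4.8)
The plan is to prove this as a continuous-parameter strong law of large numbers, combining an $L^2$ decay estimate with a Borel--Cantelli argument along a polynomial subsequence, followed by a gap-filling step that controls the oscillation between consecutive subsequence points. Write $Z_K:=\frac{1}{K-1}\int_1^K Y(k)\,dk$; since $\mathbb{E}[Y]\equiv0$ this random variable is centered, and the goal is to show $Z_K\to0$ almost surely.

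First I would establish the second-moment decay. The continuity of $R$ justifies Fubini, so $\mathbb{E}[Z_K^2]=\frac{1}{(K-1)^2}\int_1^K\int_1^K R(t,u)\,dt\,du$, and the hypothesis bounds the integrand in absolute value by $\frac{t^\alpha+u^\alpha}{1+|t-u|^\beta}$. Fixing $t$ and integrating over $u\in[1,K]$ gives $\int_1^K\frac{du}{1+|t-u|^\beta}\lesssim K^{1-\beta}$ because $\beta<1$, and integrating the remaining factor $t^\alpha$ over $[1,K]$ then yields a double integral $\lesssim K^{1-\beta}\cdot K^{1+\alpha}$. Hence $\mathbb{E}[Z_K^2]\lesssim K^{\alpha-\beta}$, which tends to $0$ since $\alpha<\beta$ (as $0\le2\alpha<\beta$); this already delivers $L^2$ convergence.

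To upgrade to almost-sure convergence I would pass to the subsequence $K_n:=n^{\gamma}$ for a parameter $\gamma>0$ to be fixed. Along it, $\mathbb{E}[Z_{K_n}^2]\lesssim n^{\gamma(\alpha-\beta)}$, so $\sum_n\mathbb{E}[Z_{K_n}^2]<\infty$ as soon as $\gamma(\beta-\alpha)>1$, and Chebyshev together with Borel--Cantelli forces $Z_{K_n}\to0$ almost surely. It remains to control the oscillation for $K\in[K_n,K_{n+1}]$. Writing $\ell_n:=K_{n+1}-K_n\sim\gamma\,n^{\gamma-1}$ and decomposing
\[
Z_K-Z_{K_n}=\Big(\tfrac1{K-1}-\tfrac1{K_n-1}\Big)\!\int_1^{K_n}\!\!Y\,dk+\frac1{K-1}\int_{K_n}^{K}\!\!Y\,dk,
\]
the first term equals $\frac{K_n-K}{K-1}\,Z_{K_n}$, which is $O(\ell_n/K_n)\,|Z_{K_n}|\to0$ almost surely, while the second is dominated, uniformly in $K\in[K_n,K_{n+1}]$, by $\frac{1}{K_n-1}\int_{K_n}^{K_{n+1}}|Y(k)|\,dk$. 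Using $|R(t,t)|\lesssim t^\alpha$ and Cauchy--Schwarz on $\mathbb{E}[|Y(t)|\,|Y(u)|]\le\sqrt{R(t,t)R(u,u)}$, the second moment of this bound is $\lesssim\big(\int_{K_n}^{K_{n+1}}t^{\alpha/2}dt\big)^2\lesssim K_{n+1}^{\alpha}\ell_n^2$, so its normalized version has second moment $\lesssim K_{n+1}^{\alpha}\ell_n^2/K_n^2\sim n^{\gamma\alpha-2}$, summable precisely when $\gamma\alpha<1$. A second application of Chebyshev and Borel--Cantelli then sends the maximal fluctuation to $0$ almost surely, and combining with $Z_{K_n}\to0$ gives $Z_K\to0$.

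The crux is that the two constraints on $\gamma$, namely $\gamma>\frac{1}{\beta-\alpha}$ (subsequence summability) and $\gamma<\frac{1}{\alpha}$ (gap-filling summability, vacuous when $\alpha=0$), are simultaneously satisfiable exactly when $\frac{1}{\beta-\alpha}<\frac{1}{\alpha}$, that is, when $2\alpha<\beta$. This is precisely the standing hypothesis, which is where it enters essentially. I expect the main technical effort to lie in the uniform-in-$K$ bound on $\int_{K_n}^{K}Y\,dk$ (handled above by passing to $\int_{K_n}^{K_{n+1}}|Y|$, which bypasses a genuine maximal inequality) and in the bookkeeping that lines up the exponents; once the admissible window for $\gamma$ is identified, the remaining estimates are routine.
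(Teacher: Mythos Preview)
Your argument is correct. The paper does not supply its own proof of this proposition: it simply cites \cite{CL04, LPS08, LHL} and quotes the result. So there is nothing in the paper to compare against line by line.

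That said, your approach---a second-moment bound $\mathbb{E}[Z_K^2]\lesssim K^{\alpha-\beta}$, Borel--Cantelli along a polynomial subsequence $K_n=n^\gamma$, and a gap-filling step using $\int_{K_n}^{K_{n+1}}|Y|$---is exactly the standard method used in the cited references (see in particular \cite{LPS08} and \cite{LHL}, where the same scheme is carried out). Your identification of the admissible window $\tfrac{1}{\beta-\alpha}<\gamma<\tfrac{1}{\alpha}$ is precisely where the hypothesis $2\alpha<\beta$ is needed, and your bookkeeping of exponents is correct. The only cosmetic remark is that the first term in your decomposition is already bounded in absolute value by $|Z_{K_n}|$ (since $(K-K_n)/(K-1)\le1$), so the factor $\ell_n/K_n$ is not actually needed there; but this changes nothing.
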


\begin{proposition}\label{prop:LPS}
Let $X$ and $Y$ be centered Gaussian random variables with
$\mathbb{E}[X]=\mathbb{E}[Y]=0$. Then the following identity holds:
\[
\mathbb{E}\left[(X^2-\mathbb{E}[X^2])(Y^2-\mathbb{E}[Y^2])\right]=2(\mathbb{E}[XY])^2.
\]
\end{proposition}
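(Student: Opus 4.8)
The plan is to reduce the identity to the fourth-moment (Isserlis/Wick) formula for jointly Gaussian variables and to verify the latter by a direct decomposition. First, expanding the left-hand side and using linearity of the expectation gives
\[
\mathbb{E}\left[(X^2-\mathbb{E}[X^2])(Y^2-\mathbb{E}[Y^2])\right]=\mathbb{E}[X^2Y^2]-\mathbb{E}[X^2]\mathbb{E}[Y^2],
\]
so it suffices to establish the single identity $\mathbb{E}[X^2Y^2]=\mathbb{E}[X^2]\mathbb{E}[Y^2]+2(\mathbb{E}[XY])^2$.

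If $\mathbb{E}[X^2]=0$, then $X=0$ almost surely and both sides vanish, so the identity holds trivially. Otherwise I would set $a=\mathbb{E}[XY]/\mathbb{E}[X^2]$ and write $Y=aX+Z$ with $Z:=Y-aX$. Since $X$ and $Y$ are jointly Gaussian, $Z$ is again a centered Gaussian variable, and a direct computation gives $\mathbb{E}[XZ]=\mathbb{E}[XY]-a\,\mathbb{E}[X^2]=0$. Because uncorrelated jointly Gaussian variables are independent, $X$ and $Z$ are independent.

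Next I would expand $Y^2=a^2X^2+2aXZ+Z^2$ and take expectations term by term. Independence of $X$ and $Z$ together with $\mathbb{E}[Z]=0$ yields $\mathbb{E}[X^3Z]=\mathbb{E}[X^3]\mathbb{E}[Z]=0$ and $\mathbb{E}[X^2Z^2]=\mathbb{E}[X^2]\mathbb{E}[Z^2]$. Invoking the fourth-moment formula $\mathbb{E}[X^4]=3(\mathbb{E}[X^2])^2$ for a centered Gaussian, and substituting $\mathbb{E}[Z^2]=\mathbb{E}[Y^2]-a^2\mathbb{E}[X^2]$, I would arrive at
\[
\mathbb{E}[X^2Y^2]=3a^2(\mathbb{E}[X^2])^2+\mathbb{E}[X^2]\big(\mathbb{E}[Y^2]-a^2\mathbb{E}[X^2]\big)=\mathbb{E}[X^2]\mathbb{E}[Y^2]+2a^2(\mathbb{E}[X^2])^2.
\]
Since $a^2(\mathbb{E}[X^2])^2=(\mathbb{E}[XY])^2$ by the choice of $a$, this is exactly the claimed identity.

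The argument is essentially routine; the only point requiring care is the passage from \emph{uncorrelated} to \emph{independent} for $Z$ and $X$, which relies crucially on their joint Gaussianity, together with the correct handling of the degenerate case $\mathbb{E}[X^2]=0$. An equivalent route would differentiate the joint characteristic function $\varphi(s,t)=\exp\big(-\tfrac12(\mathbb{E}[X^2]s^2+2\,\mathbb{E}[XY]\,st+\mathbb{E}[Y^2]t^2)\big)$ twice in each variable and evaluate at the origin to read off $\mathbb{E}[X^2Y^2]$, but the decomposition above avoids the bookkeeping of the higher-order derivatives.
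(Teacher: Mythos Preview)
Your proof is correct. The paper itself does not supply a proof of this proposition; it simply cites \cite{CL04, LPS08, LHL} for the argument. Your route via the orthogonal decomposition $Y=aX+Z$ is a clean and standard way to recover the Isserlis identity in this special case.

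One small remark: the proposition as stated says only that $X$ and $Y$ are ``centered Gaussian random variables,'' but the identity (and your argument) requires that $(X,Y)$ be \emph{jointly} Gaussian; without this, uncorrelated does not imply independent and the formula can fail. You flag this point yourself, and in the paper's application the variables arise as values of a Gaussian field, so joint Gaussianity is automatic --- but it would be worth stating the hypothesis explicitly at the outset of your proof rather than invoking it mid-argument.
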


\subsection{Proof of Theorem \ref{tm:ergo}}

Define
\[
Y_j(x;k):=k^{2s-2}\left(|E_j(x;k)|^2-\mathbb{E}|E_j(x;k)|^2\right),\quad x\in\mathcal{U}
\]
for $j=1,2,3$, which apparently satisfies $\mathbb{E}[Y(x;k)]=0$. Next is
to estimate $\mathbb{E}[Y_j(x;k_1)Y_j(x;k_2)]$ for any $k_1, k_2\geq 1$. 

Let $E_j=E_j^{\rm r}+{\rm i}E_j^{\rm i}$, $j=1,2,3$, where $E_j^{\rm r}$ and
$E_j^{\rm i}$ are the real and imaginary parts of $E_j$. A simple calculation
yields
\begin{align*}
Y_j(x;k)=k^{2s-2}\left((E_j^{\rm r}(x;k))^2-\mathbb{E}(E_j^{\rm r}(x;k))^2+(E_j^{\rm i}(x;k))^2-\mathbb{E}(E_j^{\rm i}(x;k))^2\right)
\end{align*}
and 
\begin{align}\label{eq:Y}
&\frac{|\mathbb{E}[Y_j(x;k_1)Y_j(x;k_2)]|}{k_1^{2s-2}k_2^{2s-2}}\nonumber\\
&=\mathbb{E}\Big[\left((E_j^{\rm r}(x;k_1))^2-\mathbb{E}(E_j^{\rm
r}(x;k_1))^2\right)\left((E_j^{\rm r}(x;k_2))^2-\mathbb{E}(E_j^{\rm
r}(x;k_2))^2\right)\Big]\nonumber\\
&\qquad +\mathbb{E}\Big[\left((E_j^{\rm r}(x;k_1))^2-\mathbb{E}(E_j^{\rm
r}(x;k_1))^2\right)\left((E_j^{\rm i}(x;k_2))^2-\mathbb{E}(E_j^{\rm
i}(x;k_2))^2\right)\Big]\nonumber\\
&\qquad +\mathbb{E}\Big[\left((E_j^{\rm i}(x;k_1))^2-\mathbb{E}(E_j^{\rm
i}(x;k_1))^2\right)\left((E_j^{\rm r}(x;k_2))^2-\mathbb{E}(E_j^{\rm
r}(x;k_2))^2\right)\Big]\nonumber\\
&\qquad +\mathbb{E}\Big[\left((E_j^{\rm i}(x;k_1))^2-\mathbb{E}(E_j^{\rm
i}(x;k_1))^2\right)\left((E_j^{\rm i}(x;k_2))^2-\mathbb{E}(E_j^{\rm
i}(x;k_2))^2\right)\Big]\nonumber\\
&=2\Big(\mathbb{E}[E_j^{\rm r}(x;k_1)E_j^{\rm
r}(x;k_2)]\Big)^2+2\Big(\mathbb{E}[E_j^{\rm r}(x;k_1)E_j^{\rm
i}(x;k_2)]\Big)^2\nonumber\\
&\qquad +2\Big(\mathbb{E}[E_j^{\rm i}(x;k_1)E_j^{\rm
r}(x;k_2)]\Big)^2+2\Big(\mathbb{E}[E_j^{\rm i}(x;k_1)E_j^{\rm
i}(x;k_2)]\Big)^2\nonumber\\
&=:{\rm I}_{j,1}+{\rm I}_{j,2}+{\rm I}_{j,3}+{\rm I}_{j,4},
\end{align}
where we have used Proposition \ref{prop:LPS}.

Using the facts
\[
\Re[g]\Re[h]=\frac12\Re[gh+g\overline{h}],
\]
\[
\Re[g]\Im[h]=-\Re[g]\Re[{\rm i}h]=-\frac12\Re[{\rm i}gh-{\rm
i}g\overline{h}]=\frac12\Im[gh-g\overline{h}],
\]
\[
\Im[g]\Im[h]=\Re[{\rm i}g]\Re[{\rm i}h]=\frac12\Re[g\overline{h}-gh]
\]
for any $g,h\in\mathbb{C}$, 
we get
\begin{align*}
{\rm I}_{j,1}&=\frac12\Re\left[\mathbb{E}[E_j(x;k_1)E_j(x;k_2)]+\mathbb{E}[
E_j(x;k_1)\overline{E_j(x;k_2)}]\right]^2\\
&\le\left|\mathbb{E}[E_j(x;k_1)E_j(x;k_2)]\right|^2+\left|\mathbb{E}[
E_j(x;k_1)\overline{E_j(x;k_2)}]\right|^2,
\end{align*}
\begin{align*}
{\rm I}_{j,2}&=\frac12\Im\left[\mathbb{E}[E_j(x;k_1)E_j(x;k_2)]-\mathbb{E}[
E_j(x;k_1)\overline{E_j(x;k_2)}]\right]^2\\
&\le\left|\mathbb{E}[E_j(x;k_1)E_j(x;k_2)]\right|^2+\left|\mathbb{E}[
E_j(x;k_1)\overline{E_j(x;k_2)}]\right|^2,
\end{align*}
\begin{align*}
{\rm I}_{j,3}&=\frac12\Im\left[\mathbb{E}[E_j(x;k_1)E_j(x;k_2)]-\mathbb{E}[
\overline { E_j(x;k_1)}E_j(x;k_2)]\right]^2\\
&\le\left|\mathbb{E}[E_j(x;k_1)E_j(x;k_2)]\right|^2+\left|\mathbb{E}[
E_j(x;k_1)\overline{E_j(x;k_2)}]\right|^2,
\end{align*}
\begin{align*}
{\rm
I}_{j,4}&=\frac12\Re\left[\mathbb{E}[E_j(x;k_1)\overline{E_j(x;k_2)}]-\mathbb {
E } [E_j(x;k_1)E_j(x;k_2)]\right]^2\\
&\le\left|\mathbb{E}[E_j(x;k_1)E_j(x;k_2)]\right|^2+\left|\mathbb{E}[
E_j(x;k_1)\overline{E_j(x;k_2)}]\right|^2.
\end{align*}
For $k_1,k_2\ge1$, let
\begin{align*}
\mathscr{A}_j(k_1,k_2)&=\left|\mathbb{E}[E_j(x;k_1)E_j(x;k_2)]\right|^2,\\
\mathscr{B}_j(k_1,k_2)&=\left|\mathbb{E}[E_j(x;k_1)\overline{E_j(x;k_2)}]
\right|^2.
\end{align*}
It suffices to estimate $\mathscr{A}_j(k_1,k_2)$ and $\mathscr{B}_j(k_1,k_2)$.

By Assumption \ref{as:J}, we may easily verify that 
\begin{align*}
&\mathbb{E}\left[E_j(x;k_1)E_j(x;k_2)\right]\\
=&~k_1k_2\int_{\mathbb{R}^3}\int_{\mathbb{R}^3}\Phi_{k_1}( x, y)\Phi_{k_2}( x, z)\mathbb{E}\left[J_j( y)J_j( z)\right]d y d z\\
=&~k_1k_2\int_{\mathbb{R}^3}\int_{\mathbb{R}^3}\Phi_{k_1}( x, y)\Phi_{k_2}( x, z)R_{jj}(y,z)d y d z\\
=&~0,
\end{align*}
where $R_{jj}$ is the $(j,j)$-entry of the relation kernel $R_{\boldsymbol{J}}$
of the relation operator $Q_{\boldsymbol{J}}^R$. Consequently, 
\[
\mathscr{A}_j(k_1,k_2)=0\quad \forall~k_1,k_2\ge1.
\]

For the term $\mathscr{B}_j(k_1,k_2)$, we have
\begin{align*}
\mathbb{E}\left[E_j(x;k_1)\overline{E_j(x;k_2)}\right]=&~k_1k_2\int_{\mathbb{R}^3}\int_{\mathbb{R}^3}\Phi_{k_1}( x, y)\overline{\Phi_{k_2}( x, z)}\mathbb{E}\left[J_j( y)\overline{J_j( z)}\right]d y d z\\
=&~\frac{k_1k_2}{(4\pi)^2}\int_{\mathbb{R}^3}\int_{\mathbb{R}^3}\frac{e^{{\rm i}(k_1| x- y|-k_2| x- z|)}}{| x- y|| x- z|}C_{jj}( y, z)d y d z.
\end{align*}
Noting
\[
k_1| x- y|-k_2| x- z|=(k_1+k_2)\frac{|x-y|-|x-z|}2+(k_1-k_2)\frac{|x-y|+|x-z|}2
\]
and using the coordinate transform and the symbols defined in the proof of
Theorem \ref{tm:Cjj}, we get
\begin{align*}
\mathscr{B}_j(k_1,k_2)&=\left|\frac{k_1k_2}{(4\pi)^2}\int_{\mathbb{R}^3}\int_{
\mathbb{R}^3}e^{{\rm i}\left((k_1+k_2)e_1\cdot g+(k_1-k_2)e_1\cdot
h\right)}S_2(g,h,x)dgdh\right|^2\\
&=\left|\frac{k_1k_2}{(4\pi)^2}\int_{\mathbb{R}^3}\int_{\mathbb{R}^3}e^{{\rm
i}\left((k_1+k_2)e_1\cdot g+(k_1-k_2)e_1\cdot
h\right)}\left[\frac1{(2\pi)^3}\int_{\mathbb{R}^3}e^{{\rm
i}g\cdot\xi}s_2(h,x,\xi)d\xi\right]dgdh\right|^2\\
&=\left|\frac{k_1k_2}{(4\pi)^2}\int_{{\mathbb{R}}^3}\int_{{\mathbb{R}}^3}e^{{\rm
i}(k_1-k_2)e_1\cdot h}
\left[\frac1{(2\pi)^3}\int_{\mathbb{R}^3}e^{{\rm i}g\cdot(\xi+(k_1+k_2)e_1)}dg\right]s_2(h,x,\xi)d\xi dh\right|^2\\
&=\left|\frac{k_1k_2}{(4\pi)^2}\int_{{\mathbb{R}}^3}e^{{\rm i}(k_1-k_2)e_1\cdot
h}s_2(h,x,-(k_1+k_2)e_1)dh\right|^2.
\end{align*}
If $|k_1-k_2|<1$, due to the fact that $A(x)$ is compactly supported, then we
obtain
\begin{align*}
\mathscr{B}_j(k_1,k_2)&=\bigg|\frac{k_1k_2}{(4\pi)^2}\int_{\mathbb{R}^3}e^{{\rm
i}(k_1-k_2)e_1\cdot
h}\bigg[a_{jj}\left(\frac{w(0,h,x)}2\right)\frac{|\sin(h_2/h_1)|}{(h\cdot
e_1)^2}\left(\frac{k_1+k_2}2\right)^{-2s}\theta(x)\\
&\quad+r_3(h,x,-(k_1+k_2)e_1)\bigg]dh\bigg|^2\\
&\lesssim\left(\frac{k_1k_2}{(k_1+k_2)^{2s}}\right)^2,
\end{align*}
If $|k_1-k_2|\ge1$, then for arbitrary $\beta\in(0,1)$, we deduce that 
\begin{align*}
\mathscr{B}_j(k_1,k_2)=&\left|\frac{k_1k_2}{(4\pi)^2}\frac1{{\rm i}(k_1-k_2)}\int_{{\mathbb{R}}^3}s_2(h,x,-(k_1+k_2)e_1)de^{{\rm i}(k_1-k_2)h_1}dh_2dh_3\right|^2\\
=&\left|\frac{k_1k_2}{(4\pi)^2}\frac1{{\rm i}(k_1-k_2)}\int_{{\mathbb{R}}^3}e^{{\rm i}(k_1-k_2)h_1}\partial_{h_1}s_2(h,x,-(k_1+k_2)e_1)dh_1dh_2dh_3\right|^2\\
\lesssim&\left(\frac{k_1k_2}{(k_1+k_2)^{2s}|k_1-k_2|}\right)^2\\
\le&\left(\frac{k_1k_2}{(k_1+k_2)^{2s}}\right)^2\frac1{|k_1-k_2|^\beta},
\end{align*} 
since the symbol $s_2$ is also compactly supported and
$|\partial_{h_1}s_2(h,x,\xi)|\lesssim|\xi|^{-2s}$ for $x\in\mathcal{U}$. We 
conclude from the above estimates that
\[
\mathscr{A}_j(k_1,k_2)+\mathscr{B}_j(k_1,k_2)\lesssim\frac{k_1^2k_2^2}{(k_1+k_2)^{4s}}\frac1{1+|k_1-k_2|^\beta}.
\]

Finally, it follows from \eqref{eq:Y} that 
\begin{align*}
|\mathbb{E}[Y_j(x;k_1)Y_j(x;k_2)]|
&\le 4k_1^{2s-2}k_2^{2s-2}\left(\mathscr{A}_j(k_1,k_2)+\mathscr{B}_j(k_1,
k_2)\right)\\
&\lesssim
\left(\frac{k_1k_2}{(k_1+k_2)^2}\right)^{2s}\frac1{1+|k_1-k_2|^\beta}\\
&\lesssim \frac1{1+|k_1-k_2|^\beta}.
\end{align*}
Using Proposition \ref{prop:CL} with $\alpha=0$, we get
\[
\lim_{K\to\infty}\frac1{K-1}\int_1^{K}Y_j(x;k)dk=0\quad\forall~x\in\mathcal{U},
\] 
and hence \eqref{eq:error} holds. This completes the proof of Theorem \ref{tm:ergo}.

\section{Conclusion}

In this paper, we have studied the three-dimensional Maxwell's equations driven by
a rough complex-valued Gaussian vector field, where the covariance operator of
the random source is a pseudo-differential operator with a complex-valued
strength matrix. Under an appropriate assumption of the random source, the
well-posedness of the direct scattering problem is established in the
distribution sense. The regularity of the electromagnetic field is also given.
The micro-correlation strength matrix of the random source is shown to be
uniquely determined by the high frequency limit of the expectation of the
electric field. Moreover, the diagonal entries of the strength matrix are shown
to be uniquely determined by the amplitude of the electric field averaged over
the frequency band at a single path due to the ergodicity. 

In this work, we assume that the real and imaginary parts of the random source
are independent and identically distributed, i.e., they are uncorrelated. A 
possible future work is to remove the assumption and consider more general
complex-valued Gaussian vector fields where the real and imaginary parts are
correlated. In this case, the centered random source would be determined by not
only its covariance operator but also its relation operator. The recovery of the
strength matrix of the relation operator is open since the micro-local analysis
seems not work anymore. The same issue appears in the inverse elastic wave
scattering problem. If the random source is a real-valued Gaussian vector field
and the components are independent and identically distributed, the recovery of
the scalar strength function for the elastic scattering problem has been
investigated in \cite{LHL,LL}. However, there are no results on the problem if
the random source is complex and correlated. It is also unclear how the
non-diagonal entries of the strength matrix can be uniquely determined by only
the amplitude of the electric field averaged over the frequency band at a
single path. We hope to be able to report the progress on these problems
elsewhere in the future.

\end{document}